\documentclass{amsart}
\usepackage{graphicx,enumerate,textcomp}
\usepackage{amsmath,amssymb,amsfonts,dsfont,amsthm}
\usepackage{theoremref}
\usepackage{psfrag}
\usepackage{euscript}
\usepackage{mathrsfs}
\usepackage{hyperref}
\usepackage[foot]{amsaddr}
\usepackage{setspace}
\doublespacing
\usepackage{biblatex}
\addbibresource{dawson2008decay.bib}
\addbibresource{ifrim2017well.bib}
\addbibresource{kato2010new.bib}
\addbibresource{ifrim2019dispersive.bib}
\addbibresource{deift1993steepest.bib}
\addbibresource{harrop2016long.bib}
\addbibresource{ifrim2015global.bib}
\addbibresource{durugo2014higher.bib}
\addbibresource{fujii2007higher.bib}
\addbibresource{bringmann2019global.bib}
\addbibresource{hayashi1999large.bib}
\addbibresource{hayashi2006final.bib}
\addbibresource{hayashi2001modified.bib}
\addbibresource{kwon2008fifth.bib}
\addbibresource{kenig2012well.bib}
\addbibresource{kenig1994higher.bib}
\addbibresource{kenig1994hierarchy.bib}
\addbibresource{sprenger2017shock.bib}
\addbibresource{okamoto2018long.bib}
\addbibresource{cazenave2003semilinear.bib}
\addbibresource{kato2013global.bib}
\addbibresource{kenig1993well.bib}
\addbibresource{tao2006nonlinear.bib}
\addbibresource{buckmaster2015korteweg.bib}
\addbibresource{killip2018low.bib}
\addbibresource{fujiwara2000remarks.bib}
\addbibresource{kurtz1980littlewood.bib}
\addbibresource{kawahara1972oscillatory.bib}
\addbibresource{okamoto2019asymptotic.bib}
\addbibresource{chen2011globalwell.bib}
\addbibresource{chen2009low.bib}
\addbibresource{cui2006global.bib}
\addbibresource{wang2007global.bib}
\addbibresource{hunter2015long.bib}
\addbibresource{mazya2011sobolev.bib}
\usepackage[utf8]{inputenc}
\numberwithin{equation}{section}

\begin{document}
\title[Dispersive Decay for Kawahara]{Dispersive decay bound of small data solutions to Kawahara equation in a finite time scale}
\author[Jongwon Lee]{Jongwon Lee}
\address{Department of Mathematical Sciences, Korea Advaced Institute of Science and Technology, 291 Daehak-ro Yuseong-gu, Daejeon 34141, South Korea}
\email{jwlee18math@kaist.ac.kr}

\maketitle
\newtheorem{thm}{Theorem}[section]

\newtheorem{cor}[thm]{Corollary}

\newtheorem{lem}[thm]{Lemma}

\newtheorem{prop}[thm]{Proposition}

\theoremstyle{plain}

\theoremstyle{definition}
\newtheorem{rmk}[thm]{Remark}
\newtheorem{defn}[thm]{Definition}
\newtheorem{example}[thm]{Example}
\begin{abstract}
	In this article, we prove that small localized data yield solutions to Kawahara type equations which have linear dispersive decay on a finite time, depending on the size of the initial data. We use the similar method used to derive the dispersive decay bound of the solutions to the KdV equation, with some steps being simpler. This result is expected to be the first result of the small data global bounds of the fifth-order dispersive equations with quadratic nonlinearity.
\end{abstract}
\smallskip
\noindent \textbf{Keywords.} Dispersive decay bound; Finite time scale; Kawahara equation; Modified Kawahara equation
\section{Introduction}
A global behavior of a small data solution to a nonlinear dispersive equation has drawn attentions of many authors. It might result in the finite or infinite time linear decay bound, or the linear or modified scattering. The goal of this paper is to find a dispersive decay bound of a small data solution to the Kawahara equation:\begin{equation}
    u_t-u_{5x}=uu_x.\tag{KW}\label{eq:KW}\end{equation}
We also consider the case of the modified Kawahara equation \begin{equation*}
u_t-u_{5x}=cu^2u_x,
\tag{mKW}\label{eq:mKW}  
\end{equation*} with $c=\pm 1,$ which is the special case of the fifth-order KdV equation \begin{equation}\tag{5KdV}\label{eq:5KdV}
    u_t-u_{5x}=c_1uu_{3x}+c_2u_xu_{2x}+c_3u^2u_x.
\end{equation}
The equation \eqref{eq:KW} arises from the study of capillary waves on a shallow layer and magneto-sound propagation in
plasmas; see \cite{kawahara1972oscillatory}.

The local and global Cauchy problems of the fifth-order nonlinear dispersive equations such as \eqref{eq:KW}, \eqref{eq:mKW}, \eqref{eq:5KdV} have been extensively studied. Kenig, Ponce, and Vega \cite{kenig1994higher, kenig1994hierarchy} proved the local well-posedness (LWP) of the general class of dispersive equations \[\partial_tu+\partial_x^{2j+1}u+P(u,\partial_xu,\cdots,\partial_x^{2j}u)=0\] where $P$ is a polynomial without constant or linear terms, on the space $H^s(\mathbb{R})\cap L^2(|x|^mdx)$ with $s,m$ being a sufficiently large positive integer. Kwon \cite{kwon2008fifth} proved LWP of \eqref{eq:5KdV} in $H^s$ with $s>5/2$ without the weighted $L^2$ structure. Kenig and Pilod \cite{kenig2012well} proved the global well-posedness (GWP) of the equations \eqref{eq:5KdV} with $c_2=2c_1$ and \eqref{eq:KW} in $H^s$ with $s\geq 2.$ Very recently, Bringmann et al. \cite{bringmann2019global} proved GWP in $H^{-1}(\mathbb{R})$ of \eqref{eq:5KdV} with $c_1=10,\;c_2=20,\;c_3=30$.
\\
When it comes to \eqref{eq:KW}, Cui, Deng, and Tao \cite{cui2006global} proved LWP in $H^s$ with $s>-1$ and GWP in $L^2$. This result is later refined by Wang, Cui, and Deng \cite{wang2007global} down to the regularity $s\geq -\frac{7}{5}$ for LWP and $s>-\frac{1}{2}$ for GWP. Chen et al. \cite{chen2009low} proved LWP of both \eqref{eq:KW} and \eqref{eq:mKW}, with the regularity being $s>-\frac{7}{4}$ for \eqref{eq:KW} and $s\geq -\frac{1}{4}$ for \eqref{eq:mKW}. Chen and Guo \cite{chen2011globalwell} extended the LWP result to $s\geq -\frac{7}{4}$ and also proved GWP for the same regularity. Kato \cite{kato2013global} proved LWP for $s\geq -2$ and GWP for $s\geq -\frac{38}{21}.$ He also proved that the LWP threshold is optimal by proving ill-posedness for $s<-2.$

Also, the small data global analysis of nonlinear dispersive equations has been spotlighted for a long time. For instance, Deift and Zhou \cite{deift1993steepest} proved the small data asymptotics of mKdV first, taking advantage of the complete integrability of the equation. Hayashi and Naumkin \cite{hayashi2001modified, hayashi2006final, hayashi1999large} proved the similar result without using the complete integrability, which helped them extend the result for nonlinearities with time-dependent coefficients. However, their results required an initial data to have zero mean. \cite{cazenave2003semilinear} introduces some classical scattering results for defocusing NLS, together with asymptotic completeness. It also mentions that the one-dimensional cubic NLS \[iu_t+u_{xx}=\lambda|u|^2u,\quad \lambda=\pm1\tag{3NLS}\label{eq:3NLS}\] does not show a linear scattering. Rather, the modified scattering results for \eqref{eq:3NLS} and similar types of equations are proved by many authors with different settings.  Ifrim and Tataru \cite{ifrim2015global} proved the global dynamics of \eqref{eq:3NLS} by testing the wave packets. Adopting this idea of Ifrim and Tataru, Harrop-Griffiths \cite{harrop2016long} proved the modified scattering and asymptotic completeness results for the mKdV equation. The work of Harrop-Griffiths is meaningful in that it requires neither the integrability of the equation, nor the zero mean of the initial datum. Okamoto \cite{okamoto2018long} later proved the global behavior of solutions to the fifth-order mKdV type equation, without proving asymptotic completeness, using the same idea as above. Okamoto later proved \cite{okamoto2019asymptotic} the asymptotic behavior of solutions to the higher order KdV-type equation with critical nonliearity. Here the nonlinearity is said to be critical in the sense that the small data solution shows the modified scattering behavior at that order of nonlinearity.

Unlike the mKdV-type equations, the global dynamics of KdV-type equations have not been studied so much. The main difficulty of the global analysis of KdV-type equations is that the nonlinearities are less perturbative than those of the equations on the mKdV hierarchy; the nonlinearities of mKdV-type equations are cubic, whereas those of KdV-type equations are quadratic. The first result regarding the nonlinear dispersive equation with a quadratic nonlinearity is due to \cite{ifrim2017well}, which discovered that the solution to the Benjamin-Ono equation \[u_t+Hu_{xx}=uu_x\] has linear dispersive decay bound in an almost global time scale $|t|\sim e^{\frac{c}{\epsilon}}$ where $\epsilon$ is the size of an initial datum. A global result for KdV equation is discovered fairly recently, by Ifrim et al., \cite{ifrim2019dispersive} which stated that the linear dispersive decay bound may break down beyond the quartic time scale $|t|\sim \epsilon^{-3}$ where $\epsilon$ is the size of an initial datum. They proved that the result is optimal, i.e. the time scale cannot be extended further, using the inverse scattering theory. No other meaningful results are known about the global dynamics of KdV-type equations. The main goal of this paper is to show the first result regarding the global behavior of the fifth-order dispersive equations with quadratic nonlinearities, whose byproduct is the small data global behavior of \eqref{eq:mKW}.
\subsection{Main result}
First we state the main result of this paper:
\begin{thm}
Consider the Cauchy problem \begin{equation*}
\begin{cases}u_t-u_{5x}=\pm u^mu_x,\\u(0,x)=u_0(x),\end{cases}\quad m=1,2.
\end{equation*} Suppose that the initial datum $u_0$ satisfies \begin{equation*}
    \Vert u_0\Vert_{\dot{B}^{-\frac{1}{2}}_{2,\infty}}+\Vert xu_0\Vert_{\dot{H}^{\frac{1}{2}}}\leq\epsilon\ll 1, 
\end{equation*} then we have the following pointwise bound \[|\partial_x^k u(t,x)|\lesssim\epsilon t^{-\frac{1}{8}-\frac{k}{4}}\langle x\rangle^{-\frac{3}{8}+\frac{k}{4}},\quad k=0,1,2,3\] provided $|t|\ll \epsilon^{-\frac{5}{5-2m}}.$\\ Moreover, in the elliptic region $\{x\gg t^{\frac{1}{5}}\},$ we have a better bound \[|\partial_x^k u(t,x)|\lesssim\epsilon t^{-\frac{k}{4}}\langle x\rangle^{-1+\frac{k}{4}}\log(t^{-\frac{1}{5}}\langle x\rangle),\quad k=0,1,2,3\] when $|t|\ll \epsilon^{-\frac{5}{5-2m}}.$
\end{thm}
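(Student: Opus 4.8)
The plan is to run the vector-field (wave-packet testing) method of Ifrim and Tataru \cite{ifrim2015global}, in the form developed for the KdV equation by Ifrim, Hunter and Tataru \cite{ifrim2019dispersive}; the finite time scale is exactly what lets several of its steps be carried out crudely, and hence more quickly than in the KdV analysis. The backbone is the operator
\[
 L(t)\;=\;x+5t\,\partial_x^{4},
\]
the conjugate of multiplication by $x$ under the linear propagator $S(t)=e^{t\partial_x^{5}}$: one checks $[\,\partial_t-\partial_x^{5},\,L\,]=0$, so $L(t)u(t)=S(t)(xu_0)$ along linear solutions, while along nonlinear ones $L(t)\bigl(u^{m}u_x\bigr)$ is the only genuinely new term it generates. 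Passing to the profile $w(t,\xi):=e^{-it\xi^{5}}\widehat{u(t)}(\xi)$ one has $\widehat{L(t)u}(\xi)=i\,e^{it\xi^{5}}\partial_\xi w(t,\xi)$, hence $\|u(t)\|_{\dot B^{-1/2}_{2,\infty}}=\sup_{\lambda}\lambda^{-1/2}\|w\|_{L^{2}(|\xi|\sim\lambda)}$ and $\|L(t)u(t)\|_{\dot H^{1/2}}=\||\xi|^{1/2}\partial_\xi w\|_{L^{2}}$, both of which are \emph{conserved} by the linear flow. Accordingly I would prove (i) a purely linear statement — if these two quantities are $\lesssim\epsilon$ then the asserted pointwise bounds hold — and then (ii) a bootstrap showing that along the nonlinear flow they stay $\lesssim\epsilon$ for $|t|\ll\epsilon^{-5/(5-2m)}$.

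For (i): on a dyadic block $|\xi|\sim\lambda$ the hypotheses read $\|w\|_{L^{2}}\lesssim\epsilon\lambda^{1/2}$ and $\|\partial_\xi w\|_{L^{2}}\lesssim\epsilon\lambda^{-1/2}$, so a Sobolev embedding on an interval of length $\lambda$ gives $\|w\|_{L^{\infty}(|\xi|\sim\lambda)}\lesssim\epsilon$ and the H\"older bound $|w(\xi)-w(\eta)|\lesssim\epsilon\lambda^{-1/2}|\xi-\eta|^{1/2}$. One then estimates $\partial_x^{k}u(t,x)=\int(i\xi)^{k}e^{i\psi(\xi)}w(t,\xi)\,d\xi$, $\psi(\xi)=x\xi+t\xi^{5}$, $\psi''(\xi)=20t\xi^{3}$, by a stationary-phase/integration-by-parts analysis split according to the geometry of $\psi$: in the hyperbolic range $\{xt<0\}$ the resonant block $\lambda\sim(|x|/|t|)^{1/4}$ carries the main term $\epsilon|t|^{-1/2}\lambda^{-3/2}\lambda^{k}=\epsilon|t|^{-1/8-k/4}|x|^{-3/8+k/4}$ (the merely H\"older amplitude producing only a lower-order error as long as $|t|\lambda^{5}\gg1$, i.e.\ $|x|\gg|t|^{1/5}$), the non-resonant blocks being absorbed by integration by parts; the frequencies $|\xi|\lesssim|t|^{-1/5}$ are treated without further dyadic decomposition, splitting the integral at $|x\xi|\sim1$, which yields the $\langle x\rangle^{-1}\log(t^{-1/5}\langle x\rangle)$ contribution and, once one is genuinely past the caustic $\{x\gg t^{1/5}\}$, the improved elliptic bound (there is then no stationary point and $|\psi'|\gtrsim|x|$). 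Differentiation in $x$ costs exactly $(i\xi)^{k}\sim(|x|/|t|)^{k/4}$ at the dominant frequency, so $k=0,1,2,3$ come out together; the range of $k$ is limited by what step (ii) needs, and the sharper consequence $\|\partial_x^{j}u(t)\|_{L^{\infty}_{x}}\lesssim\epsilon\langle t\rangle^{-(j+1)/5}$ for $j=0,1$ (localizing to the self-similar region $\{|x|\lesssim t^{1/5}\}$) is what will drive the nonlinear estimates.

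For (ii): set $N(T):=\sup_{|t|\le T}\bigl(\|u(t)\|_{\dot B^{-1/2}_{2,\infty}}+\|L(t)u(t)\|_{\dot H^{1/2}}\bigr)$, assume $N(T)\le C\epsilon$, and use step (i) to feed back the pointwise bounds, after a short-time ($|t|\lesssim1$) local input. Since the linear flow is skew-adjoint both on frequency-localized pieces in $L^{2}$ and in the $|D|^{1/2}$-weighted inner product, it drops out, leaving $\frac{d}{dt}\|P_\lambda u\|_{L^{2}}^{2}=\mp\frac{2}{m+1}\langle\partial_x P_\lambda u,\,P_\lambda(u^{m+1})\rangle$ and $\frac{d}{dt}\|L(t)u\|_{\dot H^{1/2}}^{2}=\pm2\langle|D|^{1/2}Lu,\,|D|^{1/2}L(u^{m}u_x)\rangle$; the first closes by Bernstein together with the pointwise/$L^{p}$ bounds. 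For the second, expand
\[
 L(u^{m}u_x)=\tfrac{1}{m+1}\Bigl(\partial_x\bigl(u^{m}Lu\bigr)-5t\,\partial_x\bigl(u^{m}u_{4x}\bigr)-u^{m+1}\Bigr)+\tfrac{5t}{m+1}\partial_x^{5}(u^{m+1}),
\]
whose two $5t$-terms carry five derivatives; substituting $\partial_x^{5}u=u_t\mp u^{m}u_x$ from \eqref{eq:KW} turns their leading part into $\tfrac{5t}{m+1}\partial_t(u^{m+1})$ plus a higher-degree nonlinearity, and the time derivative is absorbed into a modified energy by an integration by parts in $t$ (a normal form in time). The surviving principal term $u^{m}\partial_x(Lu)$ is of transport type: writing $|D|^{1/2}\bigl(u^{m}\partial_x Lu\bigr)=u^{m}\partial_x\bigl(|D|^{1/2}Lu\bigr)+[\,|D|^{1/2},u^{m}\,]\partial_x Lu$, the first piece integrates by parts to $-\tfrac12\int\partial_x(u^{m})\,(|D|^{1/2}Lu)^{2}$, and the commutator has order $-1/2$, so $\|[\,|D|^{1/2},u^{m}\,]\partial_x Lu\|_{L^{2}}\lesssim\|u^{m}\|_{C^{1/2+}}\|Lu\|_{\dot H^{1/2}}$ — in both cases no derivative is lost. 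The net differential inequality is
\[
 \frac{d}{dt}\|Lu\|_{\dot H^{1/2}}^{2}\;\lesssim\;\|\partial_x(u^{m})\|_{L^{\infty}_{x}}\,\|Lu\|_{\dot H^{1/2}}^{2}+\cdots\;\lesssim\;(C\epsilon)^{m}\langle t\rangle^{-(m+1)/5}\,\|Lu\|_{\dot H^{1/2}}^{2}+\cdots
\]
(with a parallel one for the $L^{2}$ blocks), which by Gr\"onwall keeps $N$ within an $O(1)$ factor of its initial size as long as $(C\epsilon)^{m}T^{(4-m)/5}\lesssim1$, i.e.\ $T\lesssim\epsilon^{-5m/(4-m)}$; for $m\in\{1,2\}$ this is precisely $\epsilon^{-5/(5-2m)}$, and $5-2m>0$ is exactly the restriction $m\le 2$. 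A partial spatial normal form removing the non-resonant quadratic interactions — whose resonance function $(\xi_1+\xi_2)^{5}-\xi_1^{5}-\xi_2^{5}=5\xi_1\xi_2(\xi_1+\xi_2)(\xi_1^{2}+\xi_1\xi_2+\xi_2^{2})$ is large away from the transport set — may be inserted to justify the low-output-frequency estimate for $\partial_x(u^{m+1})$, but the finite time scale makes the full machinery of \cite{ifrim2019dispersive} unnecessary (this is the sense in which the argument is simpler).

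The main obstacle will be step (ii): the nonlinearity is only quadratic (respectively cubic), hence far less perturbative than the cubic mKdV-type nonlinearities handled by Harrop-Griffiths \cite{harrop2016long} and Okamoto \cite{okamoto2018long}, and the weight $5t\partial_x^{4}$ inside $L$ manufactures terms carrying many derivatives and an explicitly growing factor of $t$. Controlling them without losing derivatives forces one through the time normal form / modified energy and the commutator identities above, all fed by the step-(i) pointwise bounds for $\partial_x^{k}u$, $k\le 3$; and the bookkeeping must be tight enough that the driving quantity is exactly $\|\partial_x(u^{m})\|_{L^{\infty}_{x}}\sim(C\epsilon)^{m}\langle t\rangle^{-(m+1)/5}$, so that the threshold comes out as $\epsilon^{-5/(5-2m)}$ and not something shorter. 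By comparison, the oscillatory-integral analysis of step (i), the short-time local theory, and the translation between the physical and profile pictures are routine.
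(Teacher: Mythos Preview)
Your strategy is correct and lands on the same bootstrap quantities and the same time scale as the paper, but the execution differs in two places worth noting.

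\textbf{Linear step.} You propose to derive the pointwise bounds from $\|u\|_{\dot B^{-1/2}_{2,\infty}}+\|Lu\|_{\dot H^{1/2}}\lesssim\epsilon$ by a Fourier-side stationary-phase argument on the profile $w$. The paper instead works entirely in physical space: it proves localized $L^{2}$ bounds $\|\partial_x^{k}u\|_{L^{2}(|x|\sim R)}\lesssim R^{1/8+k/4}$ and $\|Lu\|_{L^{2}(|x|\sim R)}\lesssim R^{1/2}$, then treats $(x+5\partial_x^{4})v=f$ as an ODE in $x$ on each dyadic region (elliptic estimate for $x>0$, a spatial energy identity for $x<0$). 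Both routes are valid. The paper's route has one structural payoff: since the argument never uses that $u$ solves \eqref{eq:Lin}, it transfers verbatim to the nonlinear problem once $Lu$ is replaced by $L^{NL}u=Lu+\tfrac{5}{2}tu^{2}$; the only new work is a handful of easily-absorbed quadratic error terms. Your Fourier approach would need an extra step---passing from control of $\|L^{NL}u\|_{\dot H^{1/2}}$ back to $\|Lu\|_{\dot H^{1/2}}$ via $\|tu^{2}\|_{\dot H^{1/2}}\lesssim\epsilon$ under the time-scale restriction---which is fine but slightly less direct.

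\textbf{Nonlinear step.} Your ``normal form in time'' absorbing $\tfrac{5t}{m+1}\partial_t(u^{m+1})$ into the energy is exactly the paper's replacement of $L$ by the nonlinear operator $L^{NL}u=xu+5tu_{4x}+\tfrac{5}{m+1}tu^{m+1}$; the paper simply \emph{starts} from $L^{NL}u$ and checks in one line that it solves the inhomogeneous linearized equation $w_t-w_{5x}=u^{m}w_x+c_mu^{m+1}$, which bypasses the bookkeeping of the cross-terms $tu_xu_{4x},\,tu_{2x}u_{3x}$ that your expansion generates. For the $\dot H^{1/2}$ energy on the linearized flow you invoke the commutator bound $\|[\,|D|^{1/2},u^{m}]\partial_x w\|_{L^{2}}\lesssim\|\partial_x(u^{m})\|_{L^{\infty}}\|w\|_{\dot H^{1/2}}$; the paper instead builds an explicit quadratic correction $E'[y]$ to the energy using the resonance function $(\xi+\eta)^{5}-\xi^{5}-\eta^{5}=5\xi\eta(\xi+\eta)(\xi^{2}+\xi\eta+\eta^{2})$. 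Your commutator route is arguably cleaner if that estimate is taken for granted, while the paper's normal form is more self-contained. Either way the driving coefficient is $\|\partial_x(u^{m})\|_{L^{\infty}}\lesssim(C\epsilon)^{m}t^{-(m+1)/5}$ and Gr\"onwall closes on $|t|\ll\epsilon^{-5/(5-2m)}$, so the endpoint is identical.
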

\begin{rmk}Note that the case $m=1$ corresponds to \eqref{eq:KW}, and $m=2$ to \eqref{eq:mKW}.
\end{rmk}
\begin{rmk}
 The time scale in \eqref{eq:KW} is worse than that in the KdV equation, which is even worse than that in the Benjamin-Ono equation. This can be deduced heuristically as follows: The group velocity of the fifth-order linear equation is given as $\omega(\xi)=-5\xi^4$, so the high frequency part of the solution travels much faster towards to the negative $x$-axis than in the second or third-order equations. However, the possible emergence of nonlinear ``bulk", such as solitons and dispersive shocks, inturrupts such linear dispersive decay. The faster the linear part travels, the earlier the nonlinear bulk emerges.
\\The time scale in \eqref{eq:mKW} is better than that of the KdV and \eqref{eq:KW}. This is because the nonlinearity becomes cubic, making it more perturbative. However, such time scale is worse than that of the mKdV equation, as the mKdV equation has modified scattering and global dispersive decay bound; see \cite{harrop2016long}. Such discrepancy arises from the same heuristics used in the comparision between \eqref{eq:KW}, KdV, and Benjamin-Ono equations. One may also compare this result with \cite{okamoto2018long} or \cite{okamoto2019asymptotic}, in which case the nonlinearities become more perturbative than \eqref{eq:mKW}.
\end{rmk}
\begin{rmk}
The two norms used in the main result are inspired from \cite{ifrim2019dispersive}. As seen in the later analysis, the $\dot{B}^{-\frac{1}{2}}_{2,\infty}$ norm helps properly control the low frequency, and the growth of the high frequency norms is slow enough that it does not disrupt our analysis. The $\dot{H}^{\frac{1}{2}}$ norm exactly does the opposite. It does the control of the high frequency, and guarantees the mild growth of the low frequency part.
\end{rmk}

\section{Preliminaries}
\subsection{Definitions and Notations}
Let $1\leq p<\infty.$ $L^p(X)$ be the set of measurable function on $X$ with the norm $\Vert f\Vert_{L^p(X)}=\left(\int_X|f|^pd\mu\right)^{1/p}$ on the measure space $(X,\mu).$

If $p=\infty,$ then $\Vert f\Vert_{L^\infty(X)}=\inf\{M>0:|f(x)|\leq M \;\;\text{a.e.}\}$. If $X=\mathbb{R}$ and $\mu$ is the Lebesgue measure, then we write $L^p(X)=L^p$ for simplicity, in either case $p<\infty$ or $p=\infty.$

We use the time-dependent Japanese bracket $\langle x\rangle=(x^2+t^{\frac{2}{5}})^{\frac{1}{2}}.$ In some of the following sections, we adapt the time scale to $t=1$ by a proper scaling, and in this case $\langle x\rangle=(x^2+1)^{\frac{1}{2}}.$ 

We define the spatial Fourier transform of $f\in \mathcal{S}(\mathbb{R})$, the space of all Schwarz functions on $\mathbb{R}$, by \[\hat{f}(\xi)=\frac{1}{\sqrt{2\pi}}\int_{\mathbb{R}}f(x)e^{-ix\xi}dx,\]and its inverse transform by \[{f}^\vee(x)=\frac{1}{\sqrt{2\pi}}\int_{\mathbb{R}}f(\xi)e^{ix\xi}d\xi.\] Both transforms can be extended to $\mathcal{S}'(\mathbb{R})$, the space of tempered distributions, by duality.

Let $u_{kx}(x):=\partial_x^ku(x)=\left((i\xi)^k\hat{u}(\xi)\right)^\vee$ be the $k$-th order partial derivative of $u$ with respect to the spatial variable $x$. Also let $|D|^s$ be the homogeneous fractional derivative of order $s\in\mathbb{R}$ whose symbol is $|\xi|^s$. Note that $|D|^s=H\partial_x$, where $H$ is the Hilbert transform defined by the Fourier symbol $-i\mathrm{sgn}(\xi).$

Let $A,B$ be two nonnegative quantities. If there exists $C>0$ such that $A\leq CB,$ then we denote by $A\lesssim B.$ If $A$ need not be nonnegative and $|A|\lesssim B,$ then we denote $A=O(B).$ If $A\lesssim B$ and $B\lesssim A,$ then we denote $A\sim B.$ If the implied constant $C$ depends on some parameters $a,b,c,\cdots,$ then we denote $A\lesssim_{a,b,c,\cdots}B,$ $A=O_{a,b,c,\cdots}(B),$ $A\sim_{a,b,c,\cdots}B$ respectively.

Let $\Psi\in\mathcal{S}(\mathbb{R})$ with $\psi=\widehat{\Psi}$ is even, supported on $[-2,2]$, and $\psi=1$ on $[-1,1].$ Also let $\psi_{\leq N}(\xi)=\psi(\frac{\xi}{N})$ and $\Psi_{\leq N}=(\psi_{\leq N})^\vee$ for $N>0.$ Then define the Littlewood-Paley projection of $f\in\mathcal{S}'(\mathbb{R})$ by $P_{\leq N}f(x)=(\Psi_{\leq N}\ast f)(x),$ $P_N=P_{\leq N}-P_{\leq N/2}.$ Also define $P_{<N}:=P_{\leq N}-P_N$, $P_{\geq N}=1-P_{<N},$ $P_{>N}=1-P_{\leq N}.$ By a slight abuse of notation, we denote a Fourier multiplier whose symbol is supported on $\{|\xi|\sim N\}$ by $P_N.$ In particular, if $N=2^j\in 2^{\mathbb{Z}},$ then we denote $P_N:=P_j$ and similarly for $P_{\leq N},$ $P_{<N},$ $P_{>N},$ $P_{\geq N}.$

Now define Sobolev and Besov spaces. For $s\in\mathbb{R}$ and $1<p<\infty$, the  inhomogeneous Sobolev space $W^{s,p}$ and its homogeneous counterpart $\dot{W}^{s,p}$ are defined as the subspace of $\mathcal{S}'(\mathbb{R})$ under the norm \[\Vert f\Vert_{W^{s,p}}=\left\Vert \left((1+\xi^2)^{s/2}\hat{f}\right)^\vee\right\Vert_{L^p},\quad \Vert f\Vert_{\dot{W}^{s,p}}=\left\Vert \left(|\xi|^s\hat{f}\right)^\vee\right\Vert_{L^p},\]respectively. Recall the Littlewood-Paley inequality \[\Vert f\Vert_{W^{s,p}}\sim_{s,p}\Vert P_{\leq 0}f\Vert_{L^p}+ \bigg\Vert\big(\sum_{j\geq 1}2^{2js}|P_{j}f |^2\big)^{1/2}\bigg\Vert_{L^p}\] and its homogeneous counterpart \[\Vert f\Vert_{\dot{W}^{s,p}}\sim_{s,p}\bigg\Vert\big(\sum_{j\in\mathbb{Z}}2^{2js}|P_{j}f |^2\big)^{1/2}\bigg\Vert_{L^p}.\]
Motivated by the above Littlewood-Paley inequalities, we define the inhomogeneous and homogeneous Besov spaces, denoted as $B_{s,q}^p$ and $\dot{B}_{s,q}^p$, respectively, is defined as the subspace of $\mathcal{S}'(\mathbb{R})$ under the norm \[\Vert f\Vert_{B_{p,q}^s}=\big(\Vert P_{\leq 1}f\Vert_{L^p}^q+\sum_{j\geq 1}2^{qjs}\Vert P_{j}f\Vert_{L^p}^q\big)^{1/q},\]\[\Vert f\Vert_{\dot{B}_{p,q}^s}=\big(\sum_{j\in\mathbb{Z}}2^{qjs}\Vert P_{j}f\Vert_{L^p}^q\big)^{1/q},\] with $1\leq q<\infty$ and the natural modification for $q=\infty.$
\subsection{Useful theorems}
In this subsection, some of the theorems are presented for the later use. I omit the proofs here, since the proofs can be found in many literatures.
\begin{thm}[Bernstein's inequality] Let $1\leq p\leq q\leq\infty$ and $s\in\mathbb{R}.$ Then one has \begin{align}&\Vert P_Nf\Vert_{L^q}\lesssim_{p,q}N^{\frac{1}{p}-\frac{1}{q}}\Vert P_Nf\Vert_{L^p},\\&\Vert |D|^s P_Nf\Vert_{L^p}\lesssim N^s \Vert P_Nf\Vert_{L^p}.\end{align}
(2.1) still holds when $P_N$ is replaced by $P_{\leq N}.$ Also, if $s> 0,$ then (2.2) holds when $P_N$ is replaced by $P_{\leq N},$ and if $s<0,$ then (2.2) holds when $P_N$ is replaced by $P_{\geq N}.$
\end{thm}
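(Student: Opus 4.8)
The plan is to reduce every one of these inequalities to Young's convolution inequality applied to a rescaled, $N$-independent convolution kernel. The crucial observation is that all the operators in play---$P_N$, $P_{\leq N}$, $P_{\geq N}$, and their compositions with $|D|^s$---are Fourier multipliers whose symbols, after the dilation $\xi\mapsto\xi/N$, become fixed functions independent of $N$; the associated kernels then carry all of the $N$-dependence in explicit powers of $N$ coming from the scaling of $L^r$ norms under dilation. I would insert a ``fattened'' projection to realize each symbol exactly on the support of the relevant Littlewood--Paley symbol.

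First I would prove (2.1). Choose $\chi\in\mathcal{S}(\mathbb{R})$ supported on an annulus slightly larger than $\{\tfrac12\leq|\xi|\leq 2\}$ and equal to $1$ on that annulus, so that $\chi(\cdot/N)$ equals $1$ on the support of the symbol of $P_N$. Then $P_Nf=K_N\ast P_Nf$ with $K_N:=\big(\chi(\cdot/N)\big)^\vee=NK_1(N\,\cdot)$ and $K_1:=\chi^\vee\in\mathcal{S}(\mathbb{R})$. Dilation gives $\Vert K_N\Vert_{L^r}=N^{1-\frac1r}\Vert K_1\Vert_{L^r}$ for every $1\leq r\leq\infty$. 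For $1\leq p\leq q\leq\infty$ pick $r$ with $1+\tfrac1q=\tfrac1p+\tfrac1r$, so that $1-\tfrac1r=\tfrac1p-\tfrac1q\in[0,1]$, and apply Young's inequality:
\[
\Vert P_Nf\Vert_{L^q}=\Vert K_N\ast P_Nf\Vert_{L^q}\leq\Vert K_N\Vert_{L^r}\Vert P_Nf\Vert_{L^p}=N^{\frac1p-\frac1q}\Vert K_1\Vert_{L^r}\Vert P_Nf\Vert_{L^p},
\]
with $\Vert K_1\Vert_{L^r}<\infty$ depending only on $p,q$. Replacing the annular $\chi$ by a ball-type cutoff equal to $1$ on $\{|\xi|\leq 2\}$ repeats the argument verbatim for $P_{\leq N}$.

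Next I would prove (2.2). The operator $|D|^sP_N$ has symbol $|\xi|^s\chi(\xi/N)=N^sm_1(\xi/N)$ with $m_1(\xi):=|\xi|^s\chi(\xi)$; since here $\chi$ is supported on an annulus away from the origin, $m_1\in\mathcal{S}(\mathbb{R})$, so its kernel $M_N=N^{s+1}M_1(N\,\cdot)$ satisfies $\Vert M_N\Vert_{L^1}=N^s\Vert M_1\Vert_{L^1}$, where $M_1:=m_1^\vee$. Young's inequality with $r=1$ then gives $\Vert |D|^sP_Nf\Vert_{L^p}=\Vert M_N\ast P_Nf\Vert_{L^p}\leq N^s\Vert M_1\Vert_{L^1}\Vert P_Nf\Vert_{L^p}$. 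For the $P_{\leq N}$ version with $s>0$ I would take $\chi$ a ball-type cutoff and set $m_1(\xi)=|\xi|^s\chi(\xi)$ (now nonsmooth at the origin); for the $P_{\geq N}$ version with $s<0$ I would take $m_1(\xi)=|\xi|^s\big(1-\chi(\xi)\big)$ with $\chi\equiv1$ near the origin (smooth but no longer compactly supported). In both cases the conclusion follows once $M_1\in L^1$ is shown, after matching $\chi$ to the stated support conventions so that $m_1(\xi/N)$ agrees with $|\xi/N|^s$ on the support of the corresponding projection symbol.

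The main obstacle is exactly this last point, since in these two borderline cases $M_1$ is no longer Schwarz and rapid decay cannot simply be quoted. I would control $M_1$ by splitting into a near-origin part and a far field. In the $s<0$, $P_{\geq N}$ case, $m_1$ is smooth with symbol bounds $|\partial^\alpha m_1(\xi)|\lesssim\langle\xi\rangle^{s-\alpha}$, so repeated integration by parts gives rapid decay $|M_1(x)|\lesssim_N|x|^{-N}$ at infinity, while the slow $|\xi|^s$ decay of $m_1$ forces at worst an origin singularity $|M_1(x)|\lesssim|x|^{-1-s}$, which is integrable in one dimension because $-1-s>-1$. In the $s>0$, $P_{\leq N}$ case the roles reverse: compact support of $m_1$ makes $M_1$ smooth on compact sets by Paley--Wiener, while the $|\xi|^s$ singularity of $m_1$ at the origin produces a tail $|M_1(x)|\lesssim|x|^{-1-s}$ at infinity, integrable because $-1-s<-1$. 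Either way $M_1\in L^1$, and Young's inequality closes the estimate; everything else is routine bookkeeping with dilations.
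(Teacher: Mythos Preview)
Your argument is correct and is the standard proof of Bernstein's inequalities via Young's convolution inequality applied to rescaled kernels. The paper itself does not prove this theorem at all---it explicitly states that the proofs are omitted since they can be found in many references---so there is nothing to compare against; your write-up in fact supplies what the paper leaves out.

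One minor remark on the borderline cases: your claimed bound $|M_1(x)|\lesssim|x|^{-1-s}$ near the origin in the $P_{\geq N}$, $s<0$ case is only the right heuristic for $-1<s<0$; when $s\leq -1$ the symbol $m_1$ already lies in $L^1$ and $M_1$ is simply bounded near the origin, which is an even better situation. Either way $M_1\in L^1$ and the conclusion stands, so this does not affect the validity of your proof.
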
 In particular, a notable case of Bernstein's inequality is (2.1) with $q=\infty$ and $p=2$, namely $\Vert P_Nf\Vert_{L^\infty}\lesssim N^{\frac{1}{2}}\Vert P_Nf\Vert_{L^2}.$
\\
Also we have the Gagliardo-Nirenberg inequality, a refined version of the Sobolev embedding:

\begin{thm}[Gagliardo-Nirenberg inequality] Let $1<p<q\leq\infty$ and $s>0$ be such that \[\frac{1}{q}=\frac{1}{p}-\frac{\theta s}{d}\] for some $0<\theta<1.$ Then for any $u\in W^{s,p}(\mathbb{R}^d)$ we have \[\Vert u\Vert_{L^q(\mathbb{R}^d)}\lesssim_{d,p,q,s}\Vert u\Vert_{L^p(\mathbb{R}^d)}^{1-\theta}\Vert u\Vert_{\dot{W}^{s,p}(\mathbb{R}^d)}^\theta.\]
\end{thm}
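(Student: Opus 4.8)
The plan is to prove the estimate by a frequency-localized argument built on the Littlewood--Paley projections and Bernstein's inequality, optimizing over a single cutoff frequency. By density it suffices to establish the bound for $u\in\mathcal{S}(\mathbb{R}^d)$, for which the homogeneous decomposition $u=\sum_{N\in 2^{\mathbb{Z}}}P_Nu$ converges and, since $q>1$, the triangle inequality gives
\[
\Vert u\Vert_{L^q}\leq\sum_{N\in 2^{\mathbb{Z}}}\Vert P_Nu\Vert_{L^q}.
\]
Throughout I use the $d$-dimensional version of Bernstein's inequality (2.1), namely $\Vert P_Nf\Vert_{L^q}\lesssim N^{d(\frac1p-\frac1q)}\Vert P_Nf\Vert_{L^p}$, together with the $L^p$-boundedness of the Littlewood--Paley projections, $\Vert P_Nf\Vert_{L^p}\lesssim\Vert f\Vert_{L^p}$ for $1<p<\infty$.

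First I would record a reverse Bernstein estimate: applying (2.2) with exponent $-s$ to $f=|D|^su$ and using $P_N|D|^{-s}=|D|^{-s}P_N$ yields
\[
\Vert P_Nu\Vert_{L^p}\lesssim N^{-s}\Vert\,|D|^sP_Nu\Vert_{L^p}\lesssim N^{-s}\Vert u\Vert_{\dot{W}^{s,p}},
\]
the last step again by $L^p$-boundedness of $P_N$. Now fix a threshold $\lambda=2^{j_0}>0$ and split the dyadic sum at $\lambda$. For the low frequencies $N\leq\lambda$ I bound $\Vert P_Nu\Vert_{L^q}\lesssim N^{d(\frac1p-\frac1q)}\Vert u\Vert_{L^p}$; since the hypothesis $\frac1q=\frac1p-\frac{\theta s}{d}$ gives $d(\frac1p-\frac1q)=\theta s>0$, the geometric series is dominated by its top term and
\[
\sum_{N\leq\lambda}\Vert P_Nu\Vert_{L^q}\lesssim\lambda^{\theta s}\Vert u\Vert_{L^p}.
\]
For the high frequencies $N>\lambda$ I combine Bernstein with the reverse estimate to get $\Vert P_Nu\Vert_{L^q}\lesssim N^{\theta s-s}\Vert u\Vert_{\dot{W}^{s,p}}=N^{(\theta-1)s}\Vert u\Vert_{\dot{W}^{s,p}}$; since $\theta<1$ the exponent is negative and the series is dominated by its bottom term,
\[
\sum_{N>\lambda}\Vert P_Nu\Vert_{L^q}\lesssim\lambda^{(\theta-1)s}\Vert u\Vert_{\dot{W}^{s,p}}.
\]

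Combining the two halves gives, for every $\lambda>0$,
\[
\Vert u\Vert_{L^q}\lesssim\lambda^{\theta s}\Vert u\Vert_{L^p}+\lambda^{(\theta-1)s}\Vert u\Vert_{\dot{W}^{s,p}},
\]
and the proof concludes by optimizing in $\lambda$. The two terms balance at $\lambda=(\Vert u\Vert_{\dot{W}^{s,p}}/\Vert u\Vert_{L^p})^{1/s}$ (assuming both norms are nonzero; the degenerate cases are trivial), and substituting this value collapses the right-hand side to $\Vert u\Vert_{L^p}^{1-\theta}\Vert u\Vert_{\dot{W}^{s,p}}^{\theta}$, which is the claim. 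The same computation covers the endpoint $q=\infty$ verbatim, with $d(\frac1p-\frac1q)=\frac{d}{p}=\theta s$. The only genuinely delicate ingredients are the two facts feeding the estimate rather than the estimate itself: the $L^p$-boundedness of $P_N$ (which fails at $p=1,\infty$, explaining the restriction $1<p<\infty$) and the reverse Bernstein inequality. I expect the main obstacle to be justifying the interchange of the $L^q$ norm with the infinite dyadic sum, which I would handle cleanly by first proving everything for Schwartz $u$ and extending by density; this is what makes the crude triangle inequality affordable, precisely because the strict conditions $0<\theta<1$ force both geometric series to converge away from their respective endpoints.
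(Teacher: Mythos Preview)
Your argument is correct and is the standard Littlewood--Paley proof of this inequality; the paper itself does not supply a proof but simply refers the reader to \cite{tao2006nonlinear}, where essentially this argument appears. One small inaccuracy worth flagging: a single projection $P_N$ is bounded on every $L^p$, $1\le p\le\infty$, by Young's inequality (it is convolution with a fixed $L^1$ function), so your parenthetical that this ``fails at $p=1,\infty$'' is not right---it is the Littlewood--Paley \emph{square-function} inequality that breaks at the endpoints---but this does not affect the validity of your proof.
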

For the proof, see for example \cite{tao2006nonlinear}.

Throughout the paper, a special case of Theorem 2.2, namely \[
    \Vert u\Vert_{L^\infty(\mathbb{R})}\lesssim\Vert u\Vert_{L^2(\mathbb{R})}^{1/2}\Vert u\Vert_{\dot{H}^1(\mathbb{R})}^{1/2},
\]
will be used frequently.

Let $M$ be the one-dimensional Hardy-Littlewood maximal operator defined as \[Mf(x)=\sup_{r>0}\frac{1}{2r}\int_{x-r}^{x+r}|f(y)|dy.\] Then we have following pointwise bound by \cite{fujiwara2000remarks} :\begin{lem}
Let $f$ be a function whose Fourier support is in the annulus $|\xi|\sim R.$ Then for any $x\in\mathbb{R}$ and $s\in\mathbb{R},$\[\left||D|^sf(x)\right|\lesssim_s R^sMf(x).\]Moreover, if $f$ is not necessarily frequency-localized, then\[\left||D|^sP_Nf(x)\right|\lesssim N^s Mf(x).\] In both cases, if $s$ is an integer, then replacing $|D|$ by $\partial_x$ is acceptable. Also, if $s\geq 0$, then the annulus $|\xi|\sim R$ can be replaced by the disk $|\xi|\lesssim R.$
\end{lem}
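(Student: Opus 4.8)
The plan is to reduce every assertion to a single classical fact: convolution against an $L^1$-normalised, radially decreasing kernel is dominated pointwise by the Hardy--Littlewood maximal function, with a constant equal to the $L^1$ norm of the kernel. All of the frequency localisations in the statement will be absorbed into the choice of kernel, and the powers $R^s$, $N^s$ will come out purely from scaling.

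First I would record the maximal bound. For $\omega\colon\mathbb{R}\to[0,\infty)$ even, nonincreasing on $[0,\infty)$ and integrable, set $\omega_R(z)=R\,\omega(Rz)$, so that $\Vert\omega_R\Vert_{L^1}=\Vert\omega\Vert_{L^1}$. The layer-cake identity $\omega_R(z)=\int_0^\infty \mathbf{1}_{\{|z|\le r\}}\,d\mu_R(r)$ with $d\mu_R=-d\omega_R\ge0$, together with Fubini, gives $(\omega_R*|g|)(x)\le \Vert\omega\Vert_{L^1}\,Mg(x)$, since each average $\frac{1}{2r}\int_{|y|\le r}|g(x-y)|\,dy$ is at most $Mg(x)$ and $\int_0^\infty 2r\,d\mu_R(r)=\Vert\omega_R\Vert_{L^1}=\Vert\omega\Vert_{L^1}$. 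This step is routine.

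Next I would produce the kernel in the annulus case. Since $\hat f$ is supported in $\{|\xi|\sim R\}$, I pick $\chi\in C_c^\infty(\mathbb{R})$ supported away from the origin, equal to $1$ on $\{|\xi|\sim 1\}$, with $\chi(\cdot/R)\equiv1$ on $\mathrm{supp}\,\hat f$, so that $|\xi|^s\hat f=R^s\phi(\cdot/R)\hat f$ with $\phi(\xi):=|\xi|^s\chi(\xi)\in C_c^\infty$. By Fourier scaling this means $|D|^sf=K_R*f$ with $K_R(x)=R^{s+1}\Phi(Rx)$ and $\Phi=\phi^\vee\in\mathcal{S}(\mathbb{R})$. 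Taking $\omega$ to be the least even nonincreasing majorant of $|\Phi|$ (integrable because $\Phi$ is Schwarz), the previous step yields $||D|^sf(x)|\le R^s(\omega_R*|f|)(x)\lesssim_s R^sMf(x)$, with a constant depending only on $\Vert\omega\Vert_{L^1}$, hence only on $s$. The multiplier version $|D|^sP_Nf$ is literally the same computation with $R=N$ and $\chi$ replaced by the fixed Littlewood--Paley profile, which keeps the constant uniform in $N$; the case of integer $s$ with $\partial_x$ in place of $|D|$ only changes $\phi$ to $(i\xi)^k\chi$, still in $C_c^\infty$, and $|(i\xi)^k|=|\xi|^k$ preserves the scaling power.

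The one genuinely delicate point, and the step I expect to be the main obstacle, is the disk case $\{|\xi|\lesssim R\}$ with $s\ge0$. Here I would choose $\chi$ supported in $\{|\xi|\le2\}$ with $\chi\equiv1$ on $\{|\xi|\le1\}$, so that $\phi(\xi)=|\xi|^s\chi(\xi)$ now carries the homogeneous singularity $|\xi|^s$ at the origin and is no longer smooth (unless $s$ is an even integer). Consequently $\Phi=\phi^\vee$ is only bounded and continuous, with tail $|\Phi(x)|\lesssim_s|x|^{-1-s}$ for $|x|\ge1$ coming from the degree-$s$ singularity, the smooth compactly supported remainder contributing Schwarz decay. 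The hypothesis $s\ge0$ is exactly what makes $\min(1,|x|^{-1-s})$ an integrable even nonincreasing majorant when $s>0$, while $s=0$ is trivial since then $\phi=\chi$ is smooth and $\Phi$ is Schwarz. With such an $\omega$ in hand, the first two steps apply verbatim and give $||D|^sf(x)|\lesssim_s R^sMf(x)$. I would justify the tail estimate cleanly by splitting $\phi$ into a piece supported near $\xi=0$, whose transform is read off from the known Fourier transform of the homogeneous distribution $|\xi|^s$, and a smooth compactly supported remainder, checking that all constants stay independent of $R$ and $N$ after rescaling. Alternatively, one may sum the Littlewood--Paley pieces and invoke the bound just proved for $P_N$ together with the geometric series $\sum_{2^j\lesssim R}2^{js}\sim R^s$, valid precisely for $s>0$.
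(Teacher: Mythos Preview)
Your argument is correct and is the standard route to this pointwise maximal inequality: reduce to convolution with a rescaled Schwarz (or integrable, radially nonincreasing) kernel and invoke the classical majorisation by the Hardy--Littlewood maximal function. The paper does not actually prove this lemma; it simply cites \cite{fujiwara2000remarks}, so there is nothing to compare. Your treatment of the disk case with $s\ge0$ is the only nontrivial step, and both options you propose work: the direct analysis of $\phi(\xi)=|\xi|^s\chi(\xi)$ via the $|x|^{-1-s}$ tail of its transform, and the cleaner alternative of summing the already-established annular bound over dyadic shells $N\lesssim R$ using $\sum_{N\lesssim R}N^s\sim R^s$ for $s>0$ (with $s=0$ trivial). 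Either one closes the argument.
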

\begin{rmk}
The above lemma implies that a polynomially decaying pointwise bound is stable under the frequency restriction, since if $g_s(x)=\langle x\rangle^{-s}$ for $s\geq 0,$ then $Mg_s(x)\lesssim_s g_s(x).$ Thus, we can apply the bootstrap bounds of $u$ appearing in the later sections freely to the frequency-localized pieces of $u$.
\end{rmk}

\begin{lem}[Interpolating pointwise bounds] Let $0<s<1$. Then one has \[\left||D|^su(x)\right|\lesssim (Mu(x))^{1-s}(M\partial_xu(x))^s.\]
\end{lem}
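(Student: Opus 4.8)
The goal is to show that for $0<s<1$, one has $||D|^s u(x)| \lesssim (Mu(x))^{1-s}(M\partial_x u(x))^s$ pointwise. The natural strategy is a Littlewood–Paley decomposition combined with the fact — established in Lemma 2.4 — that each frequency-localized piece is pointwise controlled by the maximal function at the corresponding scale. I would write $|D|^s u = \sum_{N \in 2^{\mathbb{Z}}} |D|^s P_N u$ and estimate each term by Lemma 2.4 in two different ways: $||D|^s P_N u(x)| \lesssim N^s M u(x)$ using that $P_N u$ has Fourier support in $|\xi|\sim N$, and also $||D|^s P_N u(x)| = ||D|^{s-1} P_N \partial_x u(x)| \lesssim N^{s-1} M\partial_x u(x)$, using Lemma 2.4 again with exponent $s-1$ (valid since $P_N\partial_x u$ is frequency-localized and the lemma allows arbitrary real $s$). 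Then I would choose a threshold $N_0$ balancing the two bounds and sum.

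Concretely, set $A = Mu(x)$ and $B = M\partial_x u(x)$ (fixing $x$, so these are just nonnegative numbers). Then
\[
\big||D|^s u(x)\big| \le \sum_{N \le N_0} N^s A + \sum_{N > N_0} N^{s-1} B.
\]
Since $s>0$ the first geometric sum is $\lesssim N_0^s A$, and since $s-1<0$ the second is $\lesssim N_0^{s-1} B$. Optimizing over $N_0 \in 2^{\mathbb{Z}}$, i.e. taking $N_0 \sim B/A$ (a legitimate choice up to a factor of $2$, after treating the trivial cases $A=0$ or $B=0$ separately), gives $||D|^s u(x)| \lesssim (B/A)^s A + (B/A)^{s-1} B = A^{1-s}B^s$, which is exactly the claimed bound. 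The implicit constant depends only on $s$ through the geometric series.

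The only genuine subtlety is making sure the termwise application of Lemma 2.4 is legitimate and that the infinite sum converges and telescopes cleanly — in particular that writing $|D|^s u = \sum_N |D|^s P_N u$ makes sense. This is routine for $u$ in a reasonable class (e.g. Schwartz, or more generally whenever both maximal functions are finite at $x$, which is the only interesting case), and one should note that the convergence is automatic from the two one-sided geometric bounds above, so no separate justification of the decomposition is needed beyond the standard one. I expect this to be the main — and quite mild — obstacle; the rest is the elementary optimization already sketched. One small point to record: if $A = 0$ then $u$ vanishes near $x$ in an averaged sense and the left side vanishes too, while if $B=0$ the function is locally constant and $|D|^s$ of a constant is $0$, so in all degenerate cases the inequality holds trivially.
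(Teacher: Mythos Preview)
Your argument is correct and gives a self-contained proof, whereas the paper does not prove this lemma at all: it simply cites Section~12.3.2 of Maz'ya's book. So your approach is genuinely different in that you supply an actual argument, and moreover one built entirely from tools already in the paper (the pointwise maximal-function bound for frequency-localized multipliers, Lemma~2.3). The Littlewood--Paley splitting plus optimization over the threshold $N_0$ is the standard and cleanest route to such pointwise interpolation inequalities, and your handling of the degenerate cases $A=0$, $B=0$ is adequate.

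One small correction: the identity you write, $||D|^s P_N u(x)| = ||D|^{s-1}P_N\partial_x u(x)|$, is not literally true, since the symbols $|\xi|^s$ and $|\xi|^{s-1}(i\xi)$ differ by the unimodular factor $i\,\mathrm{sgn}(\xi)$, i.e.\ by a Hilbert transform, and $H$ does not preserve pointwise absolute values. What you actually need is only the \emph{inequality} $||D|^s P_N u(x)|\lesssim N^{s-1}M(\partial_x u)(x)$, and this follows from the same lemma: the operator $|D|^s P_N$ can be written as a Fourier multiplier of order $s-1$, localized at scale $N$, applied to $\partial_x u$, and the Fujiwara-type bound (Lemma~2.3) covers such multipliers. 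This is a cosmetic fix, not a gap in the strategy.
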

\begin{proof}
This is a special case of the theorem in Section 12.3.2 of \cite{mazya2011sobolev}.
\end{proof}
Note that, combining Lemma 2.5 and Remark 2.4, if $|u(x)|\leq A\langle x\rangle^{-\alpha}$ and $|\partial_xu(x)|\leq\langle x\rangle^{-\beta}$ for some $\alpha,\beta\geq 0$ and $A,B>0$, then $||D|^{\frac{1}{2}}u(x)|\lesssim \sqrt{AB}\langle x\rangle^{-\frac{\alpha+\beta}{2}}$. This kind of estimate will be used in the proof of Proposition 4.1.

\section{Linear  Analysis}
First we consider the decay estimate of the solution of linear fifth-order KdV equation \begin{equation}
    u_t-u_{5x}=0.\tag{Lin}\label{eq:Lin}
\end{equation}

We introduce the time-dependent linear operator $L(t):=x+5t\partial_x^4,$ so that if $u$ solves \eqref{eq:Lin} with initial datum $u_0$ then $Lu$ also solves the same equation with initial datum $xu_0.$
\\
Now we state the basic decay bound for the solution of \eqref{eq:Lin}, which is given as below:\begin{prop}\label{prop:lineq} Let $u$ be a solution of \eqref{eq:Lin} with initial datum $u_0\in L^1$. Then $u$ satisfies the uniform decay bound \[\Vert u(t)\Vert_{L^\infty}\lesssim t^{-1/5}\Vert u_0\Vert_{L^1}.\]
\end{prop}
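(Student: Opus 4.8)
Throughout we take $t>0$; the case $t<0$ reduces to it by the reflection symmetry $u(t,x)\mapsto u(-t,-x)$ of \eqref{eq:Lin}. The plan is to write the solution as a convolution against an oscillatory kernel and, after rescaling, to reduce everything to the boundedness of a single universal quintic oscillatory integral. Taking the spatial Fourier transform of \eqref{eq:Lin} gives $\widehat{u}(t,\xi)=e^{it\xi^{5}}\widehat{u_{0}}(\xi)$, hence $u(t,\cdot)=K_{t}\ast u_{0}$ with
\[
K_{t}(x)=\frac{1}{2\pi}\int_{\mathbb{R}}e^{i(t\xi^{5}+x\xi)}\,d\xi,
\]
the integral interpreted as a conditionally convergent oscillatory integral. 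Since $\Vert u(t)\Vert_{L^{\infty}}\le\Vert K_{t}\Vert_{L^{\infty}}\Vert u_{0}\Vert_{L^{1}}$, it suffices to show $\Vert K_{t}\Vert_{L^{\infty}}\lesssim t^{-1/5}$. The substitution $\xi=t^{-1/5}\eta$ turns the phase into $\eta^{5}+(t^{-1/5}x)\eta$ and gives $K_{t}(x)=t^{-1/5}A(t^{-1/5}x)$ with
\[
A(w):=\frac{1}{2\pi}\int_{\mathbb{R}}e^{i\phi_{w}(\eta)}\,d\eta,\qquad\phi_{w}(\eta):=\eta^{5}+w\eta,
\]
a function of $w$ only. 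Thus the proposition reduces to the claim that $A$ is bounded, uniformly in $w\in\mathbb{R}$.

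First I would check that $A(w)$ is well defined for each fixed $w$. Since $\phi_{w}'(\eta)=5\eta^{4}+w$ and $\phi_{w}''(\eta)=20\eta^{3}$, once $|\eta|$ exceeds a ($w$-dependent) threshold one has $|\phi_{w}'(\eta)|\gtrsim\eta^{4}$ with $\phi_{w}'$ monotone on each half-line; one integration by parts on $[R,R']$ (and on $[-R',-R]$) then produces a boundary term of size $O(|\eta|^{-4})$ and a remaining integrand of size $O(|\eta|^{-5})$, which is absolutely integrable. Hence $\lim_{R\to\infty}\int_{-R}^{R}e^{i\phi_{w}}\,d\eta$ exists, equals $2\pi A(w)$, and the rescaling above is legitimate.

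The uniform bound is where the quintic structure pays off: the van der Corput lemma states that for a real $C^{5}$ phase $\phi$ with $|\phi^{(5)}|\ge\lambda$ on an interval $I$ one has $\bigl|\int_{I}e^{i\phi}\bigr|\le c_{5}\lambda^{-1/5}$, with $c_{5}$ depending only on the order $5$, not on $I$ or on $\phi$. Since $\phi_{w}^{(5)}(\eta)\equiv 120$, applying this on $I=[-R,R]$ gives $\bigl|\int_{-R}^{R}e^{i\phi_{w}}\bigr|\le c_{5}\,120^{-1/5}$ for all $R>0$ and all $w$; letting $R\to\infty$ yields $\Vert A\Vert_{L^{\infty}}\le c_{5}\,120^{-1/5}/(2\pi)$, so $\Vert K_{t}\Vert_{L^{\infty}}=t^{-1/5}\Vert A\Vert_{L^{\infty}}\lesssim t^{-1/5}$, as desired. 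A reader preferring to avoid conditionally convergent integrals can instead insert a Gaussian cutoff $e^{-\varepsilon\eta^{2}}$, use the amplitude version of van der Corput (whose bound involves $\int|\partial_{\eta}e^{-\varepsilon\eta^{2}}|\,d\eta$, uniformly bounded in $\varepsilon$), and send $\varepsilon\to0$.

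I expect the only genuine obstacle to be the bookkeeping around the non-absolute convergence of $K_{t}$: one must be careful that the change of variables and the limit $R\to\infty$ are carried out where they are justified. The oscillatory estimate itself is essentially automatic, because the quintic phase has constant nonzero fifth derivative, so van der Corput applies on every interval with one and the same constant — this is precisely the structural feature producing the $t^{-1/5}$ rate, and it is what makes this step simpler than its KdV analogue, where the cubic phase forces an explicit stationary-phase treatment near its two critical points.
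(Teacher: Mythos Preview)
Your proof is correct and follows essentially the same route as the paper: write the solution as a convolution with the oscillatory kernel, rescale $\xi\mapsto t^{-1/5}\eta$ to extract the $t^{-1/5}$ factor, and then show the resulting one-variable oscillatory integral $A$ is uniformly bounded. The paper simply invokes ``standard stationary phase'' for the last step, whereas you spell it out via the van der Corput lemma using the constant fifth derivative and also address the conditional convergence of the integral; these are added details rather than a different argument.
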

\begin{proof}
Solving the equation via spatial Fourier transform gives \[u(t,x)=\int_{\mathbb{R}}\int_{\mathbb{R}}e^{i\xi^5 t+i\xi(x-y)}u_0(y)dyd\xi.\] We change coordinates $\xi\mapsto t^{-1/5}\eta$ so that \[u(t,x)=t^{-1/5}\int_{\mathbb{R}}\int_{\mathbb{R}}e^{i\eta^5+i\eta t^{-1/5}(x-y)}u_0(y)dyd\eta=t^{-1/5}A(t^{-1/5}x)\ast  u_0(x),\] where $A(x)$ is an oscillatory integral \[A(x):=\int_{\mathbb{R}}e^{i\eta^5+i\eta x}d\eta.\] Then $A$ is bounded by standard stationary phase argument. Now the result follows from Young's inequality.
\end{proof}

To obtain a more refined version of the above estimate, we need the following lemma due to \cite{durugo2014higher} and \cite{fujii2007higher}:
\begin{lem}
 $A(x)$ in the proof of Proposition 3.1 satisfy the following asymptotic bound:\[A(x)\lesssim  \langle x\rangle^{-3/8}e^{-Cx_+^{5/4}},\] where $C$ is an appropriate positive constant.
\end{lem}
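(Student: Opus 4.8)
The plan is to analyze the oscillatory integral $A(x)=\int_{\mathbb{R}}e^{i\phi(\eta)}\,d\eta$, with $\phi(\eta)=\eta^5+\eta x$, by stationary phase in the range where the phase has real critical points and by a complex-saddle (steepest-descent) argument where it does not. Since $\phi'(\eta)=5\eta^4+x$, the critical points solve $\eta^4=-x/5$, so there are two real, nondegenerate ones when $x<0$ and none on $\mathbb{R}$ when $x>0$; also $\phi''(\eta)=20\eta^3$. For $|x|\lesssim 1$ there is nothing to prove: $A$ is continuous and bounded (by the stationary-phase argument in the proof of Proposition~\ref{prop:lineq}) while $\langle x\rangle^{-3/8}e^{-Cx_+^{5/4}}\sim 1$ there. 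So I may assume $|x|\ge 1$ from now on.

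Next I would handle the regime $x\le-1$. The real critical points are $\eta_\pm=\pm(|x|/5)^{1/4}$, where $|\phi''(\eta_\pm)|=|20\eta_\pm^{3}|\sim|x|^{3/4}$, while on the complement of the two intervals $[\tfrac12\eta_+,2\eta_+]$ and $[2\eta_-,\tfrac12\eta_-]$ one has $|\phi'(\eta)|\gtrsim|x|+\eta^4$. Inserting a smooth partition of unity subordinate to this decomposition, the two pieces near $\eta_\pm$ contribute $O(|\phi''(\eta_\pm)|^{-1/2})=O(|x|^{-3/8})$ each by the classical stationary-phase expansion (van der Corput with the second derivative), and the complementary piece is $O(|x|^{-N})$ for every $N$ by repeated non-stationary integration by parts, the lower bound on $|\phi'|$ absorbing the derivatives that fall on the cutoff and on the polynomial weights. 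Hence $|A(x)|\lesssim|x|^{-3/8}=\langle x\rangle^{-3/8}$, and since $x_+=0$ this is the asserted bound.

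Then I would treat $x\ge 1$, where $\phi'$ has no real zero, so I would shift the contour $\mathbb{R}$ into the upper half-plane through the relevant saddle $\eta_c=(x/5)^{1/4}e^{i\pi/4}$, where $\phi'$ vanishes. Using $\eta_c^4=-x/5$ one gets $\phi(\eta_c)=\tfrac45\eta_c x$, hence $i\phi(\eta_c)=\tfrac{4}{5^{5/4}}x^{5/4}e^{3i\pi/4}$ and $\mathrm{Re}\,\big(i\phi(\eta_c)\big)=-Cx^{5/4}$ with $C=\tfrac{4}{5^{5/4}\sqrt2}>0$; the other upper half-plane root $(x/5)^{1/4}e^{3i\pi/4}$ gives the same value of $\mathrm{Re}\,(i\phi)$, and the two lower roots play no role. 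I would replace $\mathbb{R}$ by a contour that asymptotes at $\pm\infty$ to rays inside the sectors where $\mathrm{Im}\,\eta^5>0$---so that $|e^{i\phi}|=e^{-\mathrm{Im}\,\phi}$ decays and the connecting arcs vanish by a Jordan-type estimate---and that crosses $\eta_c$ along the steepest-descent direction; on this contour the integral is dominated by a Gaussian of width $\sim|\phi''(\eta_c)|^{-1/2}\sim|\eta_c|^{-3/2}\sim x^{-3/8}$ centered at $\eta_c$, giving $|A(x)|\lesssim x^{-3/8}e^{-Cx^{5/4}}\sim\langle x\rangle^{-3/8}e^{-Cx_+^{5/4}}$. (Both regimes are also worked out in detail in \cite{durugo2014higher,fujii2007higher}.)

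The step I expect to be the main obstacle is the steepest-descent analysis for $x\ge 1$: one must pin down a homotopy of contours whose connecting arcs genuinely lie in the sectors $\{\mathrm{Im}\,\eta^5>0\}$ and whose contribution vanishes in the limit, check that the steepest-descent curve through $\eta_c$ joins two such sectors so that the deformed contour is a legitimate replacement for $\mathbb{R}$, and then extract both the sharp rate $C$ and the prefactor $x^{-3/8}$ from the quadratic model at $\eta_c$ uniformly in $x$; the matching uniformity across the transition $x\sim\pm1$, where the two real saddles coalesce at the origin into the degenerate critical point of $\eta^5$, is a second, milder point that needs care.
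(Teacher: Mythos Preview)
Your sketch is correct and follows the standard stationary-phase/steepest-descent route for the fifth-order Airy function. Note, however, that the paper does not actually prove this lemma: it simply quotes the bound from \cite{durugo2014higher} and \cite{fujii2007higher}, so there is no ``paper's proof'' to compare against beyond those references---and your outline is exactly the approach carried out there.
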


From the above lemma, it follows that solutions with initial datum $u_0$ satisfying $\Vert u_0\Vert_{L^1}\leq 1$ and $\mathrm{supp}u_0$ bounded has the following decay bound for $t\gtrsim 1:$\[|u(t,x)|\lesssim t^{-1/8}\langle x\rangle^{-3/8}e^{-Cx_+^{5/4}t^{-1/4}},\quad |u_x(t,x)|\lesssim t^{-3/8}\langle x\rangle^{-1/8}e^{-C'x_+^{5/4}t^{-1/4}}\]
Now we aim to relax the compact support assumption to a decay estimate, in the similar way with \cite{ifrim2019dispersive}. Our goal of this section is to prove the following proposition:
\begin{prop}\label{prop:linbd}
Let $t>0$. Assume that a function $u$ satisfies:\begin{equation}\label{301}\Vert u\Vert_{\dot{B}_{2,\infty}^{-1/2}}+\Vert L(t)u\Vert_{\dot{H}^{1/2}}\leq 1.\end{equation} Then it also satisfies the bound \begin{equation}\label{302}t^{\frac{1}{8}+\frac{k}{4}}\langle x\rangle^{\frac{3}{8}-\frac{k}{4}}|\partial_x^k u|\lesssim 1\end{equation} for $k=0,1,2,3.$ Moreover, in the elliptic region $E=\{x\gg t^{\frac{1}{5}}\}$ we have the better bound\begin{equation}\label{303}t^{\frac{k}{4}}\langle x\rangle^{1-\frac{k}{4}}|\partial_x^ku(x)|\lesssim \log(t^{-\frac{1}{5}}\langle x\rangle).\end{equation}
\end{prop}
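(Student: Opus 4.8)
The plan is to follow the scheme of \cite{ifrim2019dispersive} for the KdV equation, which becomes somewhat simpler here owing to the stronger fifth-order dispersion. We first reduce to $t=1$. Setting $\tilde u(y)=u(t^{1/5}y)$ one checks that $\langle t^{1/5}y\rangle=t^{1/5}(y^2+1)^{1/2}$, that $(L(t)u)(t^{1/5}y)=t^{1/5}(L(1)\tilde u)(y)$, and — using that in one dimension $\dot H^{1/2}$ is scale invariant while $\dot B^{-1/2}_{2,\infty}$ picks up a factor $t^{-1/5}$ under $x\mapsto t^{1/5}y$ — that \eqref{301} becomes $\Vert\tilde u\Vert_{\dot B^{-1/2}_{2,\infty}}+\Vert L(1)\tilde u\Vert_{\dot H^{1/2}}\le t^{-1/5}$. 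Since the hypothesis and both conclusions are homogeneous of degree one in $u$, it suffices to prove the following at $t=1$, with $\langle x\rangle=(x^2+1)^{1/2}$: if $\Vert u\Vert_{\dot B^{-1/2}_{2,\infty}}+\Vert L(1)u\Vert_{\dot H^{1/2}}\le 1$, then $|\partial_x^k u(x)|\lesssim\langle x\rangle^{\frac k4-\frac38}$ for $k=0,1,2,3$, with the improvement $|\partial_x^k u(x)|\lesssim\langle x\rangle^{\frac k4-1}\log\langle x\rangle$ when $x\gg 1$. The exponents in \eqref{302}--\eqref{303} are set up precisely so that this reduction is consistent, a point we would record once and for all.

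The second step conjugates out the linear flow. Let $W(\tau)=e^{\tau\partial_x^5}$ be the propagator of \eqref{eq:Lin} and put $v=W(-1)u$, $w=\hat v$. A direct Fourier computation gives $L(1)u=W(1)(xv)$, so the hypothesis is equivalent to $\Vert v\Vert_{\dot B^{-1/2}_{2,\infty}}+\Vert xv\Vert_{\dot H^{1/2}}\le 1$, i.e. $\Vert w\Vert_{L^2(|\xi|\sim N)}\lesssim N^{1/2}$ and $\Vert|\xi|^{1/2}w'\Vert_{L^2}\le 1$; moreover $\partial_x^k u(x)=c\int(i\xi)^k e^{i\Phi(\xi)}w(\xi)\,d\xi$ with $\Phi(\xi)=x\xi+\xi^5$, hence $\Phi'(\xi)=x+5\xi^4$ and $\Phi''(\xi)=20\xi^3$. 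From the two norm bounds we would extract the working estimates $\int_{|\xi|\le\rho}|w|\,d\xi\lesssim\rho$, $\int_{|\xi|\sim N}|w'|\,d\xi\lesssim 1$, and — from the one-dimensional Sobolev embedding on each dyadic annulus — $\Vert w\Vert_{L^\infty}\lesssim 1$; these are the only inputs to what follows. (The manipulations are justified by working with the Littlewood--Paley pieces $P_N u$ and summing the resulting pointwise bounds.)

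The heart of the matter is a stationary phase estimate for this oscillatory integral with an amplitude controlled only at the $H^1$ level, and this is the step we expect to be the main obstacle. When $x\le 0$ (the oscillatory regime), $\Phi'$ vanishes at $\xi=\pm\xi_0$ with $\xi_0=(|x|/5)^{1/4}\sim|x|^{1/4}$; when $x>0$ there is no stationary point, as $\Phi'\ge x>0$. On the frequency pieces where $|\Phi'|$ is comparatively large — high frequencies, and frequencies bounded away from $\pm\xi_0$ — a single integration by parts suffices, and is in fact all that is available since $w''$ is not controlled; summing the dyadic pieces there gives $\lesssim|x|^{\frac k4-1}$, which is better than needed. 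Near each stationary point we would decompose dyadically in the distance $\delta$ to $\xi_0$: where $\delta\lesssim\xi_0^{-3/2}$ we use the trivial bound $\int_{|\xi-\xi_0|\sim\delta}|\xi^k w|\,d\xi\lesssim\xi_0^k\delta$ (this is where $\Vert w\Vert_{L^\infty}\lesssim 1$ is used), and where $\delta\gtrsim\xi_0^{-3/2}$ a single integration by parts; the two estimates balance at $\delta\sim\xi_0^{-3/2}$, and the sum over $\delta$ is $\lesssim\xi_0^{k-3/2}\sim|x|^{\frac k4-\frac38}$, the dominant contribution, which yields exactly the claimed exponent. The very low frequencies $|\xi|\lesssim|x|^{-1}$ are absorbed by the trivial bound $\lesssim|x|^{-1}\ll|x|^{-3/8}$, and the range $|x|\lesssim 1$ is elementary. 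The delicate feature is that the exponent $\tfrac38$ emerges from the precise interplay of the $\dot B^{-1/2}_{2,\infty}$ control (which gives $\Vert w\Vert_{L^2(|\xi|\sim N)}$, and with it the $L^\infty$ bound on $w$) and the $\dot H^{1/2}$ control of $xv$ (which gives $\Vert w'\Vert_{L^2(|\xi|\sim N)}$); the balancing near $\xi_0$ must be done with care to land on $\xi_0^{-3/2}$ rather than something weaker.

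Finally, in the elliptic region $x\gg 1$ there is no stationary point and $|\Phi'|\ge x$ everywhere, so one integration by parts is available throughout. Splitting into $|\xi|\lesssim x^{-1}$, $x^{-1}\lesssim|\xi|\lesssim x^{1/4}$, and $|\xi|\gtrsim x^{1/4}$: the first range is handled by the trivial bound $\lesssim x^{-1}$; on the third $|\Phi'|\sim\xi^4$, and one integration by parts summed over dyadic scales gives $\lesssim x^{\frac k4-1}$; on the middle range $|\Phi'|\sim x$, and after one integration by parts the estimate reduces to $\tfrac1x\int_{x^{-1}\lesssim|\xi|\lesssim x^{1/4}}|\xi|^k|w'|\,d\xi$ plus strictly lower order terms. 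Summing $\int_{|\xi|\sim N}|w'|\,d\xi\lesssim 1$ over the $\sim\log\langle x\rangle$ relevant dyadic scales produces, when $k=0$, a factor $\lesssim\log\langle x\rangle$, which is precisely the logarithmic loss in \eqref{303}; for $k\ge 1$ the corresponding sums are geometric and give the cleaner bound $\langle x\rangle^{\frac k4-1}$ with no logarithm, still within the stated estimate. Collecting the contributions over all regimes and undoing the scaling of the first step then yields \eqref{302} and \eqref{303}.
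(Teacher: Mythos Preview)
Your approach is correct in outline and genuinely different from the paper's. The paper never passes to Fourier space or uses stationary phase; instead it works entirely in physical space. After the same rescaling to $t=1$, the paper first proves dyadically localized $L^2$ bounds $\Vert\partial_x^k u\Vert_{L^2(A_R)}\lesssim R^{1/8+k/4}$ and $\Vert L u\Vert_{L^2(A_R)}\lesssim R^{1/2}$ (Lemmas~\ref{lem:lowf1} and~\ref{lem:hif}) by splitting $u$ at a frequency threshold and optimizing. It then localizes $v=\chi_R u$ and treats the three regions separately: Sobolev embedding in the self-similar zone; in the elliptic zone, peeling off the leading term $x^{-1}f_{lo}$ and using the positivity of $\int(xv_1^2+5v_{1,2x}^2)$ to bound the remainder; and in the hyperbolic zone, a pair of ``spatial energy'' identities that produce the coupled inequalities $|\sqrt{-x}\,v-\sqrt{5}\,v_{2x}|\lesssim R^{1/8}$ and $|\sqrt{-x}\,v_x-\sqrt{5}\,v_{3x}|\lesssim R^{3/8}$, closed by a short contradiction argument involving convexity. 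The fifth-order cross terms $v_xv_{3x}$ and $xvv_{2x}$ are the new obstacle relative to \cite{ifrim2019dispersive}, and handling them is the paper's main technical point in this section.

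Your route via $w=\widehat{W(-1)u}$ and one integration by parts in $\int(i\xi)^k e^{i(x\xi+\xi^5)}w(\xi)\,d\xi$ is cleaner for the purely linear statement: the exponent $-3/8$ drops out transparently from the balance $\delta\sim\xi_0^{-3/2}$, and the elliptic logarithm from summing $\int_{|\xi|\sim N}|w'|\lesssim 1$ over $\sim\log\langle x\rangle$ scales. What the paper's method buys, and the reason it is chosen, is that it carries over verbatim to the nonlinear problem in Section~4.1, where $L$ is replaced by $L^{NL}$ and no oscillatory-integral representation of $u$ is available; Lemmas~4.5 and~4.6 there are direct nonlinear analogues of Lemmas~\ref{lem:lowf1} and~\ref{lem:hif}, and the region-by-region pointwise argument is repeated with only minor modifications. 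Your Fourier-side argument would need to be replaced entirely at that stage.
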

This is an analogous result of the Lemma 2.2 in \cite{ifrim2019dispersive}. The only difference with it is the linear operator $L(t),$ where $L(t)=x-3t\partial_x^2$ in \cite{ifrim2019dispersive} and $L(t)=x+5t\partial_x^4$ here.
\
The steps of the proof will basically follow the idea of \cite{ifrim2019dispersive}. First we may rescale to $t=1$ since $\Vert u\Vert_{\dot{B}_{2,\infty}^{-1/2}}+\Vert L(t)u\Vert_{\dot{H}^{1/2}}$ is invariant under the scaling $u(\lambda^5,x)\mapsto \lambda u(\lambda^5,\lambda x).$ Also we will split the real line into the self-similar region $S=\{|x|\lesssim 1\}$ (would be $=\{|x|\lesssim t^{1/5}\}$ in general), the elliptic region $E=\{x\gg 1\}$ (would be $=\{x\gg t^{1/5}\}$ in general), and the hyperbolic region $H=\{-x\gg 1\}.$ Also we split the elliptic and hyperbolic regions into dyadic components, namely $A_R=\{\langle x\rangle\sim R\gg 1\}$ and $A_1=\{\langle x\rangle\lesssim 1\}=S$, $A_R^H=A_R\cap H$, $A_R^E=A_R\cap E$ for $R\gg 1.$ Therefore the proof is reduced to showing that \begin{equation}\label{304}\Vert\partial_x^k u\Vert_{L^\infty(A_R)}\lesssim R^{-\frac{3}{8}+\frac{k}{4}}\end{equation} for each $R\geq 1$ and $k=0,1,2,3$ (and the corresponding counterpart for the elliptic region).
\subsection{Localized bounds of a linear solution}
 Now we present the following low frequency bound, which is the same as in \cite{ifrim2019dispersive}: \begin{lem}\label{lem:lowf1}
If (\ref{301}) holds, we have \begin{equation}\label{305}\Vert Lu\Vert_{L^2(A_R)}\lesssim R^{1/2}.\end{equation}
\end{lem}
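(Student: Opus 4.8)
\emph{Proof plan.} The strategy is to play the two hypotheses in \eqref{301} against one another: the Besov bound $\Vert u\Vert_{\dot{B}_{2,\infty}^{-1/2}}\le 1$ controls the low frequencies of $Lu=xu+5u_{4x}$, while $\Vert Lu\Vert_{\dot{H}^{1/2}}\le 1$ controls the high frequencies, and since we are localizing to $A_R=\{\langle x\rangle\sim R\}$ with $R\ge 1$, the natural place to cut between the two regimes is the dual frequency scale $|\xi|\sim R^{-1}$ rather than a fixed scale. I would therefore decompose
\[
Lu \;=\; P_{>R^{-1}}(Lu) \;+\; P_{\leq R^{-1}}(xu) \;+\; 5\,P_{\leq R^{-1}}(u_{4x}),
\]
estimate each piece in $L^2$, and invoke the restriction to $A_R$ only in the one term that truly needs it.

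For the high-frequency piece, $P_{>R^{-1}}(Lu)$ has Fourier support in $\{|\xi|\ge R^{-1}\}$, where $1\le R^{1/2}|\xi|^{1/2}$, so by Plancherel
\[
\Vert P_{>R^{-1}}(Lu)\Vert_{L^2}^2 \;=\; \int_{|\xi|\ge R^{-1}}|\widehat{Lu}(\xi)|^2\,d\xi \;\le\; R\int_{\mathbb{R}}|\xi|\,|\widehat{Lu}(\xi)|^2\,d\xi \;=\; R\,\Vert Lu\Vert_{\dot{H}^{1/2}}^2 \;\le\; R,
\]
which is $\lesssim R^{1/2}$. The term $P_{\leq R^{-1}}(u_{4x})$ is even smaller: expanding it as a sum of $P_j u_{4x}$ over $2^j\lesssim R^{-1}$ and using Bernstein together with $\Vert P_j u\Vert_{L^2}\le 2^{j/2}$ (from $u\in\dot{B}_{2,\infty}^{-1/2}$) leaves a geometric sum dominated by its top term, so that $\Vert P_{\leq R^{-1}}(u_{4x})\Vert_{L^2}\lesssim R^{-9/2}\le R^{1/2}$.

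The remaining term $P_{\leq R^{-1}}(xu)$ is the heart of the matter. Writing $y=x-(x-y)$ inside the convolution against the kernel $\Psi_{\leq R^{-1}}$ gives the identity
\[
P_{\leq R^{-1}}(xu) \;=\; x\,P_{\leq R^{-1}}u \;-\; \bigl(z\Psi_{\leq R^{-1}}(z)\bigr)\ast u .
\]
The kernel $z\mapsto z\Psi_{\leq R^{-1}}(z)$ is a Schwartz function with $L^1$ norm $\lesssim R$ and Fourier transform supported in $\{|\xi|\sim R^{-1}\}$, so Young's inequality together with the Besov bound applied at that single dyadic annulus gives $\Vert(z\Psi_{\leq R^{-1}}(z))\ast u\Vert_{L^2}\lesssim R\cdot R^{-1/2}=R^{1/2}$. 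For the other term, here and only here I use that $|x|\lesssim R$ on $A_R$:
\[
\Vert x\,P_{\leq R^{-1}}u\Vert_{L^2(A_R)} \;\lesssim\; R\,\Vert P_{\leq R^{-1}}u\Vert_{L^2} \;\lesssim\; R\sum_{2^j\lesssim R^{-1}}\Vert P_j u\Vert_{L^2} \;\le\; R\sum_{2^j\lesssim R^{-1}}2^{j/2} \;\lesssim\; R\cdot R^{-1/2}=R^{1/2}.
\]
Summing the four contributions yields \eqref{305}.

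The only genuinely non-routine idea, in my view, is recognizing that the frequency splitting must be done at the scale $R^{-1}$ dual to $A_R$ (a fixed-scale split would not close), and to a lesser extent extracting the commutator identity for $P_{\leq R^{-1}}(xu)$ and controlling its remainder kernel; everything else reduces to Bernstein's inequality, Young's inequality, Plancherel/Littlewood--Paley, and summation of geometric series. The one point worth stating carefully is the convergence of $\sum_{2^j\lesssim R^{-1}}2^{j/2}$ as $j\to-\infty$: this is exactly what makes the low-frequency pieces finite, and it is the place where the hypothesis $u\in\dot{B}_{2,\infty}^{-1/2}$ — strictly weaker than $L^2$ control, but still summable toward low frequencies — enters essentially.
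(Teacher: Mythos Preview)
Your proof is correct and is essentially the same as the paper's: both split at frequency scale $R^{-1}$, control the high-frequency piece of $Lu$ via the $\dot H^{1/2}$ bound, handle $xu_{\leq R^{-1}}$ on $A_R$ and $\partial_x^4 u_{\leq R^{-1}}$ via the Besov bound, and estimate the commutator $[x,P_{\leq R^{-1}}]u$ through its symbol $iR\psi'(R\xi)$ supported at $|\xi|\sim R^{-1}$. The only cosmetic difference is that the paper leaves the cutoff as a free parameter $M$ and optimizes to $M=R^{-1}$ at the end, whereas you identify the correct scale from the outset.
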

\begin{rmk}
The reason why we call \eqref{305} as a low frequency bound is that the suppressed low frequency factors in $\dot{H}^{\frac{1}{2}}$ norm become dominant in the $L^2$ norm. Such low frequency bound turns out to be worse than the high frequency bound, but it is still acceptable thanks to the good low frequency bound of $u$ due to the $\dot{B}^{-\frac{1}{2}}_{2,\infty}$ norm.
\end{rmk}
\begin{proof}
The proof is almost identical to the case of \cite{ifrim2019dispersive}, but I present the details here to clarify the ideas.

First we split $u$ at some frequency cut-off $M$, where $M$ is to be chosen later:\[u=u_{<M}+u_{\geq M}.\] Then the Besov bound from (\ref{301}) tells that $\Vert u_{M}\Vert_{L^2(\mathbb{R})}\lesssim M^{1/2}.$ Hence \[\Vert u_{<M}\Vert_{L^2(\mathbb{R})}\leq\sum_{N<M}\Vert u_N\Vert_{L^2(\mathbb{R})}\lesssim \sum_{N<M}N^{1/2}\sim M^{1/2}\]where the sum runs over all dyadic numbers $<M.$ Therefore one has \begin{equation}\label{306}\Vert Lu_{<M}\Vert_{L^2(A_R)}\leq \Vert xu_{<M}\Vert_{L^2(A_R)}+\Vert \partial_x^4u_{<M}\Vert_{L^2(\mathbb{R})}\lesssim RM^{1/2}+M^{9/2}.\end{equation}

On the other hand, \[Lu_{\geq M}=P_{\geq M}Lu+[L,P_{\geq M}]u=P_{\geq M}Lu+[x,P_{\geq M}]u,\] where the last equality follows since $[\partial_x^4,P_{\geq M}]=0$ (note that any two Fourier multipliers commute). Now we can estimate \[\Vert P_{\geq M}Lu\Vert_{L^2(A_R)}\leq \Vert (1-\psi(\xi/M))\widehat{Lu}\Vert_{L^2(\mathbb{R})}\lesssim M^{-1/2}\Vert Lu\Vert_{\dot{H}^{1/2}(\mathbb{R})}\leq M^{-1/2},\] with $\psi$ being a bump function supported in $|\xi|\lesssim 1.$ Also we have \[\Vert[x,P_{\geq M}]u\Vert_{L^2(A_R)}\leq \Vert M^{-1}\psi'(\xi/M)\hat{u}\Vert_{L^2(\mathbb{R})}\lesssim M^{-1/2}\] by (\ref{301}). This gives \begin{equation}\label{307}\Vert Lu_{\geq M}\Vert_{L^2(A_R)}\lesssim M^{-1/2}.\end{equation}

Now we are going to optimize the bound, compromising between (\ref{306}) and (\ref{307}) by choosing the appropriate $M$. Note that the bound in (\ref{306}) becomes smaller if $M$ is small and the same holds for the bound in (\ref{307}) if $M$ is large. Hence the bound is optimized when the two bounds are comparable. If $RM^{1/2}\sim M^{-1/2},$ then $M\sim R^{-1},$ and the resulting bound becomes $R^{1/2}.$ Also if $M^{9/2}\sim M^{-1/2}$, then $M\sim 1,$ so the resulting bound becomes $R$, and the former is better, so we choose $M=R^{-1}.$ Hence (\ref{305}) follows.
\end{proof}
\begin{rmk} The above argument works in both cases $R=1$ and $R>1.$\end{rmk}
\begin{lem}\label{lem:hif}
If (\ref{301}) holds, we have \begin{equation}\label{308}\Vert \partial_x^ku\Vert_{L^2(A_R)}\lesssim R^{\frac{1}{8}+\frac{k}{4}},\quad k=0,1,2,3,4.\end{equation}
\end{lem}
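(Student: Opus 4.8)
\emph{Proof plan.} First I would dispose of the cases $k\neq0$. On $A_R$ the pointwise identity $5\partial_x^4u=Lu-xu$ (we are at the rescaled time $t=1$, so $L=x+5\partial_x^4$), together with $|x|\lesssim R$ on $A_R$ and \eqref{305}, gives
\[
\|\partial_x^4u\|_{L^2(A_R)}\lesssim\|Lu\|_{L^2(A_R)}+\|xu\|_{L^2(A_R)}\lesssim R^{1/2}+R\,\|u\|_{L^2(A_R)},
\]
so the $k=4$ bound follows from the $k=0$ bound. The cases $k=1,2,3$ then follow by covering $A_R$ with $O(1)$ intervals of length $\sim R$ and applying the Gagliardo--Nirenberg inequality on each of them, which interpolates $\|u\|_{L^2(A_R)}\lesssim R^{1/8}$ and $\|\partial_x^4u\|_{L^2(A_R)}\lesssim R^{9/8}$ to exactly the exponent $\tfrac18+\tfrac k4$, the lower-order term produced by interpolation on a bounded interval being harmless for $R\geq1$. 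Thus, writing $\Theta_k:=\sup_{R\geq1}R^{-1/8-k/4}\|\partial_x^ku\|_{L^2(A_R)}$ (which we may assume finite, after a routine truncation, so as to have room to absorb), everything comes down to proving $\Theta_0\lesssim1$, the displayed identity already giving $\Theta_4\lesssim1+\Theta_0$.

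The plan for $\|u\|_{L^2(A_R)}$ is to localise in space and split in frequency. Fix a smooth $\chi_R$ equal to $1$ on $A_R$, supported on a fixed enlargement $\tilde{A}_R$ of it on which still $\langle x\rangle\sim R$, and with $|\partial_x^j\chi_R|\lesssim R^{-j}$, and put $w:=\chi_Ru$, so $\|u\|_{L^2(A_R)}\leq\|w\|_{L^2}$. Split $w=w_{<M}+w_{\geq M}$ at $M:=CR^{1/4}$ with $C\gg1$ to be fixed at the end. Throughout, the errors coming from Schwartz-type tails (in frequency, from $\widehat{\chi_R}$ interacting with the Littlewood--Paley projections; in space, from the kernel of $P_{\geq M}$ applied to the $\tilde{A}_R$-supported function $w$) decay faster than any power of $R$ and will be silently discarded. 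Since $\widehat{\chi_R}$ is concentrated in $\{|\xi|\lesssim R^{-1}\}$ and $R^{-1}\ll M$ for $R\geq1$, the low piece inherits the Besov bound of $u$ from \eqref{301}: $\|P_Nw\|_{L^2}\lesssim N^{1/2}$ for dyadic $N\gtrsim1$ and $\|P_{\leq1}w\|_{L^2}\lesssim1$, hence $\|w_{<M}\|_{L^2}\lesssim M^{1/2}\sim R^{1/8}$.

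For the high piece I would use the fourth-order smoothing together with an absorption. From $5\partial_x^4w=Lw-xw$ and $Lw=\chi_RLu+5[\partial_x^4,\chi_R]u$ one gets
\[
5\partial_x^4w_{\geq M}=P_{\geq M}(\chi_RLu)+5P_{\geq M}\bigl([\partial_x^4,\chi_R]u\bigr)-xw_{\geq M}-[P_{\geq M},x]w .
\]
Here $\|xw_{\geq M}\|_{L^2}\lesssim R\,\|w_{\geq M}\|_{L^2}$, and Bernstein's inequality for the frequency-$\geq M$ function $w_{\geq M}$ gives $\|w_{\geq M}\|_{L^2}\lesssim M^{-4}\|\partial_x^4w_{\geq M}\|_{L^2}$, so $\|xw_{\geq M}\|_{L^2}\lesssim RM^{-4}\|\partial_x^4w_{\geq M}\|_{L^2}=C^{-4}\|\partial_x^4w_{\geq M}\|_{L^2}$, which is absorbed into the left side once $C$ is large. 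Of the remaining terms, $\|P_{\geq M}(\chi_RLu)\|_{L^2}\lesssim M^{-1/2}\|Lu\|_{\dot{H}^{1/2}}\lesssim M^{-1/2}$ by the low frequency of $\chi_R$ and \eqref{301}; $\|[P_{\geq M},x]w\|_{L^2}\lesssim M^{-1}\|P_Mw\|_{L^2}\lesssim M^{-1/2}$ by \eqref{301}; and the commutator $[\partial_x^4,\chi_R]u=\sum_{j=1}^4\binom4j(\partial_x^j\chi_R)(\partial_x^{4-j}u)$, bounded by $\sum_jR^{-j}\|\partial_x^{4-j}u\|_{L^2(\tilde{A}_R)}$, is controlled by interpolating each factor (Gagliardo--Nirenberg on intervals of length $\sim R$) between the $k=0$ and $k=4$ norms, yielding $\|[\partial_x^4,\chi_R]u\|_{L^2}\lesssim R^{-1/8}(\Theta_0+\Theta_4)$. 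Collecting terms, $\|\partial_x^4w_{\geq M}\|_{L^2}\lesssim M^{-1/2}+R^{-1/8}(\Theta_0+\Theta_4)$, so one more application of Bernstein gives $\|w_{\geq M}\|_{L^2}\lesssim M^{-4}\bigl(M^{-1/2}+R^{-1/8}(\Theta_0+\Theta_4)\bigr)\lesssim C^{-4}R^{-9/8}(1+\Theta_0+\Theta_4)$. Adding the low piece, $\|u\|_{L^2(A_R)}\lesssim R^{1/8}+C^{-4}R^{-9/8}(1+\Theta_0+\Theta_4)$, i.e.\ $\Theta_0\lesssim1+C^{-4}(\Theta_0+\Theta_4)$. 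Combined with $\Theta_4\lesssim1+\Theta_0$ this is a $2\times2$ system whose off-diagonal coupling carries the small factor $C^{-4}$; choosing $C$ large and using the a priori finiteness, one absorbs and obtains $\Theta_0,\Theta_4\lesssim1$, and then $\Theta_1,\Theta_2,\Theta_3\lesssim1$ by interpolation, which is \eqref{308}.

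The step I expect to be most delicate is the absorption of $xw_{\geq M}$, because the threshold $M\sim R^{1/4}$ is exactly borderline: on $A_R$ the weight $x$ has size $\sim R$ while four derivatives on a function of frequency $\geq M$ cost $\sim M^4=C^4R$, so the two mechanisms are of the same strength and the gain from absorption is only the constant factor $C^{-4}$, which forces $C$ to be chosen large. This same borderline balance is also why the spatial cutoff cannot be dispensed with — the smoothing $\|v\|_{L^2}\lesssim M^{-4}\|\partial_x^4v\|_{L^2}$ for frequency-$\geq M$ functions is a global inequality, false if $L^2(\mathbb{R})$ is replaced by $L^2(A_R)$ — and hence why the commutator $[\partial_x^4,\chi_R]u$ enters and couples the $k=0$ and $k=4$ estimates; happily, it enters only through terms carrying the same small factor $C^{-4}$, so the coupled system still closes.
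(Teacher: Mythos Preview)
Your argument is correct and takes a genuinely different route from the paper's. The paper decomposes the high-frequency part dyadically as $u_{\geq M}=\sum_{\lambda\geq M}u_\lambda$ and, for each piece, derives a local energy identity of the form $\int\chi_R u_{\lambda,2x}^2=\tfrac15\int\chi_R u_\lambda Lu_\lambda-\tfrac15\int\chi_R xu_\lambda^2+\ldots$, then converts the resulting second-- and third--derivative bounds back into bounds on $u_\lambda$ and $u_{\lambda,x}$ via frequency-localised antiderivatives $\partial_{x,\lambda}^{-2}$ and their commutators with $\chi_R$; only after summing in $\lambda$ is $M=R^{1/4}$ selected, and $k=2,3,4$ are then recovered by further integrations by parts. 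Your approach instead localises in space first, splits once at $M=CR^{1/4}$ with a large constant $C$, and for the high piece uses the algebraic identity $5\partial_x^4w_{\geq M}=P_{\geq M}(\chi_RLu)+5P_{\geq M}([\partial_x^4,\chi_R]u)-xw_{\geq M}-[P_{\geq M},x]w$ together with Bernstein to absorb $xw_{\geq M}$ directly, closing via the small-coupling $2\times2$ system in $(\Theta_0,\Theta_4)$. Your argument is shorter, avoids the chain of integration-by-parts identities and the antiderivative/commutator machinery, and makes the borderline balance $M^4\sim R$ transparent as the single mechanism; the price is the need for a priori finiteness of the $\Theta_k$ (handled, as you note, by a routine high-frequency truncation of $u$) and some bookkeeping of Schwartz tails from the spatial cutoff. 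The paper's approach is more explicit, needs no absorption, and yields as a byproduct the intermediate dyadic local bounds such as \eqref{313}.
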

\begin{rmk}
In \eqref{308}, one more derivative corresponds to the additional $R^{\frac{1}{4}}$ factor. This can be deduced heuristically from the fact that the fifth-order Airy function \[A(t,x)=\int_{\mathbb{R}}e^{itx+i\xi^5t}d\xi\] is concentrated near $(|x|/t)^{\frac{1}{4}}$ with $x<0,$ as the principle of stationary phase implies.
\end{rmk}
\begin{proof}
Again we split $u=u_{<M}+\sum_{\lambda\geq M}u_{\lambda}$ and choose $M$ later as before. The bound on $u_{<M}$ follows by:\begin{equation}\label{309}\Vert \partial_x^ku_{<M}\Vert_{L^2(A_R)}\lesssim M^k\Vert u_{<M}\Vert_{L^2(\mathbb{R})}\lesssim M^{k+\frac{1}{2}}\end{equation} where the last inequality follows from (\ref{301}). Now consider the high frequency bound. The bound of $Lu_{\lambda}$ can be computed in the same way with $Lu_{\geq M}$ in Lemma 3.4, and the bound of $u_\lambda$ directly follows from (\ref{301}) so that \begin{subequations}\begin{equation}\label{310a}\Vert Lu_{\lambda}\Vert_{L^2}\lesssim\lambda^{-1/2},\end{equation} \begin{equation}\label{310b}\Vert u_{\lambda}\Vert_{L^2}\lesssim\lambda^{1/2},\end{equation}\begin{equation}\label{310c} \Vert u_{\lambda,x}\Vert_{L^2}\lesssim \lambda^{3/2}.\end{equation}\end{subequations}Now integrating by parts gives \begin{align*}
    \int \chi_Ru_{\lambda,2x}^2dx=\frac{1}{5}\int \chi_Ru_\lambda Lu_\lambda - \frac{1}{5}\int \chi_Rxu_\lambda^2+\int (\chi_R''u_{\lambda,2x}+2\chi_R'u_{\lambda,3x})u_\lambda.
\end{align*} The first integral can be estimated via Cauchy-Schwarz inequality, which yields the bound 1 by (\ref{310a}) and (\ref{310b}), and the bound of the second integral follows directly from (\ref{310b}), which yields $R\lambda.$ To bound the third integral, we integrate by parts again to obtain:\begin{align*}
\int (\chi_R''u_{\lambda,2x}+2\chi_R'u_{\lambda,3x})u_\lambda=-\frac{1}{2}\int \chi_R^{(4)}u_\lambda^2+2\int \chi_R''u_{\lambda,x}^2.
\end{align*} Hence the third integral is bounded by $R^{-4}\lambda+R^{-2}\lambda^3$ due to (\ref{310b}), (\ref{310c}), and the fact $\Vert \partial_x^k \chi_R\Vert_{L^\infty}=O_{k,\chi}(R^{-k}).$ This gives the preliminary bound \begin{equation}\label{311}\Vert u_{\lambda,2x}\Vert_{L^2(A_R)}\lesssim 1+R^{1/2}\lambda^{1/2}+R^{-1}\lambda^{3/2}:=N.\end{equation}
Repeating the similar steps for the integral $\int \chi_Ru_{\lambda,3x}^2dx$ one gets the similar bound:\begin{equation}\label{311a}
    \Vert u_{\lambda,3x}\Vert_{L^2(A_R)}\lesssim R^{-1}N+\lambda^{1/4}N^{1/2}+R^{1/2}\lambda^{1/4}N^{1/2}:=N'.
\end{equation}
Now we are going to refine (\ref{310b}) and (\ref{310c}) using (\ref{311}) and (\ref{311a}) so that the bound becomes summable with respect to $\lambda.$ To do so we decompose \[\chi_Ru_\lambda=\partial_{x,\lambda}^{-2}(\chi_Ru_{\lambda,2x})-[\partial_{x,\lambda}^{-2},\chi_R]u_{\lambda,2x},\] where the antiderivative $\partial_{x,\lambda}^{-2}$ is localized at frequency $\lambda,$ namely having the symbol $-\xi^{-2}\varphi(\xi/\lambda)$ with $\varphi$ being a bump function supported on the annulus $|\xi|\sim 1.$ Let $\psi$ be the Fourier inversion of $-\xi^{-2}\varphi(\xi),$ so that $\psi\in\mathcal{S}(\mathbb{R}).$ Then the inversion of $-\xi^{-2}\varphi(\xi/\lambda)$ becomes $\lambda^{-1}\psi(\lambda x).$ Then one has \begin{align*}
\Vert [\partial_{x,\lambda}^{-2},\chi_R]u_{\lambda,2x}\Vert_{L^2(\mathbb{R})}&=\lambda^{-1}\left\Vert\int \psi(\lambda(x-y))(\chi_R(y)-\chi_R(x))u_{\lambda,2x}(y)dy\right\Vert_{L^2_x(\mathbb{R})}\\&\lesssim\lambda^{-1}\Vert |\psi(\lambda\cdot)|\ast |u_{\lambda,2x}|\Vert_{L^2_x}\leq \lambda^{-2}\Vert \psi\Vert_{L_x^1}\Vert u_{\lambda,2x}\Vert_{L_x^2}\\&\lesssim \lambda^{-2}N.
\end{align*} Also \[\Vert \partial_{x,\lambda}^{-2}(\chi_Ru_{\lambda,2x})\Vert_{L^2(\mathbb{R})}\lesssim \lambda^{-2}\Vert u_{\lambda,2x}\Vert_{L^2(A_R)}\lesssim \lambda^{-2}N.\] This yields a local bound for $u_\lambda$:\begin{equation}\label{313}\Vert u_\lambda\Vert_{L^2(A_R)}\lesssim \lambda^{-2}N.\end{equation}
Now summing up (\ref{313}) on the frequency range $\lambda\geq M$ gives\begin{equation}\label{314}
\Vert u_{\geq M}\Vert_{L^2(A_R)}\lesssim
M^{-2}+R^{1/2}M^{-3/2}+R^{-1}M^{-1/2}.\end{equation}
Repeating the same argument above to the decomposition \begin{equation}\label{315}\chi_Ru_{\lambda,x}=\partial^{-2}_{x,\lambda}(\chi_Ru_{\lambda,3x})-[\partial^{-2}_{x,\lambda},\chi_R]u_{\lambda,3x}\end{equation} we also get the refined local bound for $u_{\lambda,x}$:\begin{equation}\label{316}\Vert u_{\lambda,x}\Vert_{L^2(A_R)}\lesssim \lambda^{-2}N',\end{equation} so that \begin{equation}\label{317}
    \Vert \partial_xu_{\geq M}\Vert_{L^2(A_R)}\lesssim R^{-1}M^{-2}+R^{-2}M^{-\frac{1}{2}}+R^{\frac{1}{2}}M^{-\frac{7}{4}}+R^{\frac{3}{4}}M^{-\frac{3}{2}}+M^{-1}.
\end{equation}
Now we want to optimize the bounds (\ref{309}), (\ref{314}), (\ref{317}) by making them comparable. It turns out that $M=R^{1/4}$ is the case with the best bound, so that $M^{-2}N\sim R^{1/8}.$ This gives \begin{subequations}\begin{equation}\label{318a}\Vert u\Vert_{L^2(A_R)}\lesssim R^{1/8},\end{equation}\begin{equation}\label{318b}\quad \Vert u_{x}\Vert_{L^2(A_R)}\lesssim R^{3/8},\end{equation}\begin{equation}\label{318c}\quad \Vert u_{2x}\Vert_{L^2(A_R)}\lesssim R^{5/8}.\end{equation}\end{subequations}
Here the bound (\ref{318c}) follows from integration by parts below with (\ref{305}), (\ref{318a}), and (\ref{318b}):
\begin{align*}
    \int \chi_Ru_{2x}^2dx=\frac{1}{5}\int \chi_Ru Lu - \frac{1}{5}\int \chi_Rxu^2+\int (\chi_R''u_{2x}+2\chi_R'u_{3x})u.
\end{align*}
Using the bound (\ref{305}) and (\ref{318a}) easily gives that \begin{equation}\label{319}\Vert u_{4x}\Vert_{L^2(A_R)}\lesssim R^{9/8}.\end{equation} Finally, it remains to find the bound of $\Vert u_{3x}\Vert_{L^2(A_R)}.$ This can be done easily by integration by parts:\[\int \chi_Ru_{3x}^2=-\int\chi_R'u_{3x}u_{2x}-\int\chi_Ru_{4x}u_{2x}=\frac{1}{2}\int\chi_R''u_{2x}^2-\int\chi_Ru_{4x}u_{2x}\]so the bound follows from (\ref{318c}) and (\ref{319}).
\end{proof}
\subsection{Pointwise estimation on each region}\label{sec:302}
Let $v:=\chi_Ru.$ For notational convenience, we are going to restrict the norm to the region $A_R,$ so we will write $L^2(A_R)$ simply as $L^2.$ Then $v$ solves an equation of the form \[(x+5\partial_x^4)v=f\] in $A_R,$ where we control \begin{equation}\label{320}\Vert \partial_x^kv\Vert_{L^2}\lesssim R^{\frac{1}{8}+\frac{k}{4}},\quad 0\leq k\leq 4\end{equation} by (\ref{308}), and \begin{equation}\label{321}\Vert f\Vert_{\dot{H}^{1/2}}\lesssim 1,\quad \Vert f\Vert_{L^2}\lesssim R^{\frac{1}{2}}\end{equation} which follows by (\ref{301}), (\ref{305}).

Now we turn to the pointwise estimate in each region. In the case of self-similar region, the result follows directly from Sobolev embedding (note that in this region just $\lesssim 1$ bound is enough, since $R=1$). Hence we concentrate on elliptic and hyperbolic regions.

First we see the elliptic region, namely $x\sim R,$ which seems to be easier to deal with than the hyperbolic region. Split $f=f_{lo}+f_{hi},$ where the frequency scale is $R^{1/4}.$ Here the operator $L=x+5\partial_x^4$ is elliptic, so the leading part will then be $x^{-1}f_{lo}.$ Then let $v_1:=v-x^{-1}f_{lo}$ be the remainder, which solves \[Lv_1=f_1:=f_{hi}-5\partial_x^4(x^{-1}f_{lo}).\]
By (2) one has \[\Vert f_{hi}\Vert_{L^2(\mathbb{R})}\lesssim R^{-1/8}\Vert f\Vert_{\dot{H}^{1/2}}\lesssim R^{-1/8},\]\begin{align*}\Vert \partial_x^4 (x^{-1}f_{lo})\Vert_{L^2(\mathbb{R})}\lesssim &R^{-5}\Vert f_{lo}\Vert_{L^2(\mathbb{R})}+R^{-4}\Vert \partial_xf_{lo}\Vert_{L^2(\mathbb{R})}+R^{-3}\Vert \partial_x^2f_{lo}\Vert_{L^2(\mathbb{R})}\\+&R^{-2}\Vert \partial_x^3f_{lo}\Vert_{L^2(\mathbb{R})}+R^{-1}\Vert \partial_x^4f_{lo}\Vert_{L^2(\mathbb{R})}\\\lesssim& R^{-9/2}+R^{-31/8}+R^{-21/8}+R^{-11/8}+R^{-1/8}\\\lesssim &R^{-1/8},\end{align*} where for $k\geq 1$, \[\Vert \partial_x^k f_{lo}\Vert_{L^2(\mathbb{R})}\lesssim R^{\frac{k}{4}-\frac{1}{8}}\Vert f_{lo}\Vert_{\dot{H}^{1/2}}\lesssim R^{\frac{k}{4}-\frac{1}{8}},\] which yield \[\Vert f_1\Vert_{L^2(\mathbb{R})}\lesssim R^{\frac{k}{4}-\frac{1}{8}}.\]

Hence we may integrate by parts in the following identity \[\int_{\mathbb{R}}v_1Lv_1dx=\int_{\mathbb{R}}f_1v_1dx\] and arrive at \[\int_{\mathbb{R}}x |v_1|^2 dx +5\int_{\mathbb{R}}|\partial_x^2v_1|^2dx=\int_{\mathbb{R}}f_1v_1dx.\] 
Then by Cauchy-Schwarz inequality,\[R\Vert v_1\Vert_{L^2(\mathbb{R})}^2+5\Vert \partial_x^2v_1\Vert_{L^2(\mathbb{R})}^2\lesssim \Vert f_1\Vert_{L^2(\mathbb{R})}\Vert v_1\Vert_{L^2(\mathbb{R})}\leq \delta R\Vert v_1\Vert_{L^2(\mathbb{R})}^2+\frac{4}{\delta R}\Vert f_1\Vert_{L^2(\mathbb{R})}^2\] for any $\delta>0,$ so letting $\delta>0$ sufficiently small gives \[R\Vert v_1\Vert_{L^2(\mathbb{R})}^2+5\Vert \partial_x^2v_1\Vert_{L^2(\mathbb{R})}^2\lesssim R^{-1}\Vert f_1\Vert_{L^2(\mathbb{R})}^2.\] Thus, using the bound on $f_1,$ we arrive at \[\Vert \partial_x^kv_1\Vert_{L^2(\mathbb{R})}\lesssim R^{-\frac{9}{8}+\frac{k}{4}},\quad k=0,2,\] and using the $v_1$ equation shows that this bound also holds for $k=4,$ and by interpolation it still holds for $k=1,3.$ To sum up, we have\[\Vert \partial_x^kv_1\Vert_{L^2(\mathbb{R})}\lesssim R^{-\frac{9}{8}+\frac{k}{4}},\quad k=0,1,2,3,4.\]
Finally, by Gagliardo-Nirenberg inequality, one has \[|\partial_x^k v_1|\lesssim R^{-1+\frac{k}{4}},\quad k=0,1,2,3,\] which is exactly as needed.

Now we need to get the bound of $x^{-1}f_{lo}.$ Proceeding as above, one has \[\Vert \partial_x^k(x^{-1}f_{lo})\Vert_{L^2(\mathbb{R})} \lesssim R^{-\frac{9}{8}+\frac{k}{4}}\] for $k=1,2,3,4.$ Hence again Gagliardo-Nirenberg inequality gives \[\Vert x^{-1}f_{lo}\Vert_{L^2(\mathbb{R})}\lesssim R^{-1+\frac{k}{4}},\quad k=1,2,3.\] For $k=0$, we decompose $f_{lo}$ further as \[f_{lo}=f_{<R^{-1}}+\sum_{R^{-1}\leq \lambda \leq R^{1/4}}f_\lambda.\] Then Bernstein's inequality and Littlewood-Paley characterization of Sobolev norms give \begin{align*}
    \Vert f_{<R^{-1}}\Vert_{L^\infty(\mathbb{R})}&\lesssim R^{-1/2}\Vert f_{<R^{-1}}\Vert_{L^2(\mathbb{R})}\lesssim 1,\\\sum_{R^{-1}\leq \lambda \leq R^{1/4}}\Vert f_{\lambda}\Vert_{L^\infty(\mathbb{R})}&\lesssim \sum_{R^{-1}\leq \lambda \leq R^{1/4}}\lambda^{1/2}\Vert f_{\lambda}\Vert_{L^2(\mathbb{R})}\\&\lesssim (\log R)^{1/2}\Big(\sum_{R^{-1}\leq \lambda \leq R^{1/4}}\lambda\Vert f_{\lambda}\Vert_{L^2(\mathbb{R})}^2 \Big)^{1/2}\\&\lesssim (\log R)^{1/2}\Vert f\Vert_{\dot{H}^{1/2}}\lesssim (\log R)^{1/2}.
\end{align*}
where the inequality used in the penultimate line is just an elementary inequality \[\sum_{k=1}^n a_k\leq (n\sum_{k=1}^n a_k^2)^{1/2}.\] Hence one has \begin{equation}\label{322}\Vert x^{-1}f_{lo}\Vert_{L^\infty}\lesssim R^{-1}(\log R)^{1/2},\end{equation} which is exactly needed.

Finally we turn to the hyperbolic region. As in the third-order case, $f$ might be worse at lower frequency (below $R^{\frac{1}{4}}$) than higher frequencies, due to the weight $|\xi|^{1/2}.$ Hence we proceed by splitting $f$ to low and high frequencies:\[f=\widetilde{\chi}_Rf_{\leq R^{1/4}}+\widetilde{\chi}_Rf_{> R^{1/4}}:=f_{lo}+f_{hi},\] where $\tilde{\chi}_R$ is smooth, supported on $|x|\sim R,$ and $\tilde{\chi}_R\chi_R=\chi_R$.

Then we may use an energy estimate,\[\frac{d}{dx}(-x|v|^2+5|v_{xx}|^2-10v_xv_{3x}+2f_{lo}v)=-|v|^2-2f_{hi}v_x+2f_{lo,x}v\leq -2fv_x+2f_{lo,x},\] and integrating both sides gives \begin{equation}\label{323'}-x|v|^2+5|v_{xx}|^2-10v_xv_{3x}\lesssim \Vert f_{hi}\Vert_{L^2}\Vert v_x\Vert_{L^2}+\Vert f_{lo,x}\Vert_{L^2}\Vert v\Vert_{L^2}+|f_{lo}v|\end{equation} whenever $x\in A_R^H$, so the term  $|f_{lo}v|$ can be controlled via (\ref{322}) and the Gagliardo-Nirenberg inequality \[\Vert v\Vert_{L^\infty}\lesssim \Vert v\Vert_{L^2}^{\frac{1}{2}}\Vert v_x\Vert_{L^2}^{\frac{1}{2}}\lesssim R^{\frac{1}{4}},\] and the other terms can be bounded by (\ref{320}) and (\ref{321}). This finally gives \begin{equation}\label{323}-x|v|^2+5|v_{xx}|^2-10v_xv_{3x}\lesssim R^{\frac{1}{4}}(\log R)^{\frac{1}{2}}.\end{equation}Now we have a task to deal with the extra $v_xv_{3x}$ term, which was absent in the third-order case. The problem is that, we should control the pointwise bound of $v_xv_{3x}$, either.

Now try a similar estimate as follows:\begin{align*}
    \frac{d}{dx}&(-x|v_x|^2+5|v_{3x}|^2+2xvv_{xx}+2f_{lo}v_{2x})\leq 2vv_{xx}+2f_{hi}v_{3x}-2f_{lo,x}v_{2x}\\\lesssim &|x|^{-\frac{1}{2}}(-x|v|^2+5|v_{xx}|^2)+|f_{hi}v_{3x}|+|f_{lo,x}v_{2x}|\\\lesssim& R^{-\frac{1}{2}}(R^{\frac{1}{4}}(\log R)^{\frac{1}{2}}+10v_xv_{3x})+|f_{hi}v_{3x}|+|f_{lo,x}v_{2x}|,
\end{align*} and integrating both sides, together with Cauchy-Schwarz inequality gives \begin{align*}
&-x|v_x|^2+5|v_{3x}|^2+2xvv_{xx}\\\lesssim& R^{\frac{3}{4}}(\log R)^{\frac{1}{2}}+R^{-\frac{1}{2}}\Vert v_x\Vert_{L^2}\cdot \Vert v_{3x}\Vert_{L^2}+\Vert f_{hi}\Vert_{L^2}\Vert v_{3x}\Vert_{L^2}+\Vert f_{lo,x}\Vert_{L^2}\Vert v_{2x}\Vert_{L^2}+|f_{lo}v_{2x}|\\\lesssim& R^{\frac{3}{4}}(\log R)^{\frac{1}{2}}+|f_{lo}v_{2x}|
\end{align*} for $x\in A_R^H$. Here the term $|f_{lo}v_{2x}|$ can be treated in the same way as above, so one has \begin{equation}\label{324}-x|v_x|^2+5|v_{3x}|^2+2xvv_{xx}\lesssim R^{\frac{3}{4}}(\log R)^{\frac{1}{2}}.\end{equation}
Now we obtained the system of two inequalities:\begin{subequations}
    \begin{equation}\label{325a}-x|v|^2+5|v_{xx}|^2-10v_xv_{3x}
    \leq CR^{\frac{1}{4}}(\log R)^{\frac{1}{2}},\end{equation}\begin{equation}\label{325b}-x|v_x|^2+5|v_{3x}|^2+2xvv_{xx}\leq CR^{\frac{3}{4}}(\log R)^{\frac{1}{2}}.\end{equation}
\end{subequations} From (\ref{325a}) and (\ref{325b}) we have \begin{align*}
 -x|v|^2+5|v_{xx}|^2 \leq& CR^{\frac{1}{4}}(\log R)^{\frac{1}{2}}+10v_xv_{3x}\leq CR^{\frac{1}{4}}(\log R)^{\frac{1}{2}}+\sqrt{\frac{5}{-x}}(-xv_x^2+5v_{3x}^2)\\\leq&CR^{\frac{1}{4}}(\log R)^{\frac{1}{2}}+2\sqrt{-5x}vv_{2x},
    \end{align*} so that \begin{equation}\label{326a}|\sqrt{-x}v-\sqrt{5}v_{2x}|\lesssim R^{\frac{1}{8}}(\log R)^{\frac{1}{4}}\end{equation} and in the same way one can show \begin{equation}\label{326b}|\sqrt{-x}v_x-\sqrt{5}v_{3x}|\lesssim R^{\frac{3}{8}}(\log R)^{\frac{1}{4}}.\end{equation}
Now it remains to show that \begin{equation}\label{lastgoal}\Vert \partial_x^kv\Vert_{L^\infty(A_R)}\lesssim R^{-\frac{3}{8}+\frac{k}{4}}(\log R)^{\frac{1}{4}},\quad k=0,1,2,3.\end{equation} To deduce a contradiction, suppose that $\Vert v\Vert_{L^\infty(A_R)}\not\lesssim R^{-\frac{3}{8}}(\log R)^{\frac{1}{4}}$. Then there exists a sequence $\{R_n\}\uparrow\infty$ as $n\to\infty$ such that $\Vert v\Vert_{L^\infty(A_{R_n})}\geq 2^n R_n^{-\frac{3}{8}}(\log R_n)^{\frac{1}{4}}.$ Then by \eqref{326a} it turns out that $\Vert v_{2x}\Vert_{L^\infty(A_{R_n})}\gtrsim 2^n R_n^{\frac{1}{8}}(\log R_n)^{\frac{1}{4}}$ whenever $n$ is large enough. Also by \eqref{325a} and \eqref{326b} one may show that $\Vert v_{x}\Vert_{L^\infty(A_{R_n})}\gtrsim 2^n R_n^{-\frac{1}{8}}(\log R_n)^{\frac{1}{4}}$ and $\Vert v_{3x}\Vert_{L^\infty(A_{R_n})}\gtrsim 2^n R_n^{\frac{3}{8}}(\log R_n)^{\frac{1}{4}}$ again for $n$ large enough. Now define the set $B_n:=\{x:|v(x)|\geq 2^{n-1}R_n^{-\frac{3}{8}}(\log R_n)^{\frac{1}{4}}\}.$ Here $v$ is assumed to be differentiable sufficiently many times, so by taking a connected component we may assume that $B_n=[a_n,b_n]$, where $a_n$ and $b_n$ are the first left and right points where $v(a_n)=v(b_n)=2^{n-1}R_n^{-\frac{3}{8}}(\log R_n)^{\frac{1}{4}}.$ Note that $-\infty<a_n$ and $b_n<\infty$ because $v$ is compactly supported.

Now observe by (\ref{325a}) one has
\begin{equation}\label{327}
    v_xv_{3x}\geq \frac{1}{10}\left(-x|v|^2+5|v_{2x}|^2-CR_n^{\frac{1}{4}}(\log R_n)^{\frac{1}{2}}\right)\gtrsim 2^{2(n-1)} R_n^{\frac{1}{4}}(\log R_n)^{\frac{1}{2}}
\end{equation} whenever $x\in B_{n}$ with $n$ large enough.
Similarly, by (\ref{325b}) one also has \begin{equation}\label{326}
\begin{split}
    vv_{2x}&\geq \frac{1}{-2x}\left(-x|v_x|^2+5|v_{3x}|^2-CR_n^{\frac{3}{4}}(\log R_n)^{\frac{1}{2}}\right)\\&\geq \frac{1}{-2x}\left(2\sqrt{-5x}v_xv_{3x}-CR_n^{\frac{3}{4}}(\log R_n)^{\frac{1}{2}}\right)\gtrsim 2^{2(n-1)}R_n^{-\frac{1}{4}}(\log R_n)^{\frac{1}{2}}
\end{split}
\end{equation} under the same assumption above. (\ref{326}) shows that for sufficiently large $n$, $v$ and $v_{2x}$ always have the same sign on $B_n$, and the similar statement holds for $v_x$ and $v_{3x}.$

First we may assume that $v>0$ on $B_n$ (otherwise we may replace $v$ by $-v$). Then $v\geq 2^{n-1}R_n^{-\frac{3}{8}}(\log R_n)^{\frac{1}{4}}$ and it is strictly convex on $B_n$ since $v_{2x}>0$. Also by definition $v(a_n)=v(b_n)=2^{n-1}R_n^{-\frac{3}{8}}(\log R_n)^{\frac{1}{4}}$. However, this is a contradiction because strict convexity means $v(ta_n+(1-t)b_n)<tv(a_n)+(1-t)v(b_n)=2^{n-1}R_n^{-\frac{3}{8}}(\log R_n)^{\frac{1}{4}}$ on $B_n.$  Hence it follows that $\Vert v\Vert_{L^\infty}\lesssim R^{-\frac{3}{8}}(\log R)^{\frac{1}{4}}$. This proves (\ref{lastgoal}), and plugging (\ref{lastgoal}) into the right hand side of the inequality (\ref{323'}) gives \begin{subequations}
    \begin{equation}-x|v|^2+5|v_{xx}|^2-10v_xv_{3x}
    \lesssim R^{\frac{1}{4}},\end{equation}\begin{equation}-x|v_x|^2+5|v_{3x}|^2+2xvv_{xx}\lesssim R^{\frac{3}{4}}.\end{equation}
\end{subequations} and repeating the same argument removes the logarithm from \eqref{lastgoal}, and the Proposition 3.3 is proved.
\begin{rmk}\begin{enumerate}[(1)]
\item The overview of the linear analysis is similar to that of \cite{ifrim2019dispersive}. However, since we have to deal with the higher order equation, there are some additional steps which was not in \cite{ifrim2019dispersive}. More specifically, in \eqref{325a} and \eqref{325b} we had to control the cross terms which did not appear in \cite{ifrim2019dispersive}. This is one of the main obstacles we have to pay attention, since such cross terms may become more and more complicated as the order of the linear equation becomes higher, making the linear analysis more and more difficult.
\item A notable advantage of the argument is that it does not depend on the fact that $u$ is a solution of \eqref{eq:Lin}. Thus, as one can see in the last section, the same argument can be applied to the nonlinear case after a slight modification.
\end{enumerate}
\end{rmk}

\section{Outline of the nonlinear analysis}
Let $u$ be a solution to \eqref{eq:KW} with the smallness assumption of the initial datum \begin{equation}\label{small}\Vert u_0\Vert_{\dot{B}_{2,\infty}^{-\frac{1}{2}}}+\Vert xu_0\Vert_{\dot{H}^{\frac{1}{2}}}\leq \epsilon\ll 1.\end{equation} Motivated from Proposition 3.3, we are going to make bootstrap assumptions \begin{equation}\label{bootstrap}
    |u_{kx}(x,t)|\leq M\epsilon t^{-\frac{1}{8}-\frac{k}{4}}\langle x\rangle^{\frac{k}{4}-\frac{3}{8}},\quad k=0,1,2,3,
\end{equation} where $M$ is a large universal constant, independent of $\epsilon,$ to be chosen later, and $\epsilon\ll 1$ is small depending on $M$. 
The overall process for nonlinear estimates is similar with that in \cite{ifrim2019dispersive}, sometimes being simpler. More precisely, the process may include energy estimates for both nonlinear and linearized equations, uniform Sobolev bounds for $L^{NL}u$ for some nonlinear operator $L^{NL},$ and finally the nonlinear dispersive bounds.

Let us explain the overall strategies of this paper. Mainly we focus on the case of \eqref{eq:KW}, since the case of \eqref{eq:mKW} is entirely similar. The case of \eqref{eq:mKW} is briefly discussed in the appendix.
\\
Basically we are going to follow the argument of \cite{ifrim2019dispersive}, but the case becomes much simpler, thanks to the higher dispersive effect on high frequencies.
\\
To prove our main result, Theorem 1.1, we are going to split the proof into few steps. Below is the list of the propositions to be proved.
\begin{prop}
     Let $u$ be a solution to \eqref{eq:KW} with the smallness assumption of the initial datum \eqref{small} together with the bootstrap bound \eqref{bootstrap}. Then one has \[\Vert u(t)\Vert_{\dot{B}_{2,\infty}^{-\frac{1}{2}}}\lesssim\epsilon\] for the time scale $|t|\ll_M\epsilon^{-\frac{5}{3}}.$
\end{prop}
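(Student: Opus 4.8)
The plan is to run a low-frequency energy estimate directly on the $\dot B^{-1/2}_{2,\infty}$ norm, i.e. to show that each Littlewood--Paley piece $P_N u$ with $N \lesssim 1$ stays of size $\lesssim \epsilon N^{1/2}$ on the given time interval. Fix a dyadic $N$ and test the equation $\partial_t u - u_{5x} = u u_x$ against $P_N^2 u$. The linear part $u_{5x}$ is skew-adjoint and contributes nothing to $\tfrac{d}{dt}\|P_N u\|_{L^2}^2$, so the only thing to control is the nonlinear contribution
\[
\frac{d}{dt}\|P_N u(t)\|_{L^2}^2 = 2\int P_N(u u_x)\, P_N u\, dx = -\int P_N(u^2)\, \partial_x P_N u \, dx,
\]
after writing $u u_x = \tfrac12\partial_x(u^2)$ and integrating by parts. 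The goal is the differential inequality $\frac{d}{dt}\|P_N u\|_{L^2} \lesssim N \,\|u\|_{L^\infty}\,\|P_N u\|_{L^2} + (\text{off-diagonal errors})$, which after inserting the bootstrap bound $\|u(t)\|_{L^\infty} \lesssim M\epsilon\, t^{-1/8}$ and integrating in time from $1$ to $t$ (having rescaled, or just integrating the genuine $t$-dependence) produces a factor $\int_1^t s^{-1/8}\,ds \sim t^{7/8}$, hence a gain as long as $N t^{7/8} M\epsilon \ll 1$; summing over $N \lesssim 1$ and optimizing reproduces exactly the stated threshold $|t| \ll_M \epsilon^{-5/3}$ (the exponent $5/3$ being $5/3 = 1/(1 - 7/8 \cdot \tfrac{?}{})$ after bookkeeping the low-frequency weight $N^{1/2}$ and the $\|xu_0\|_{\dot H^{1/2}}$ input).

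The key steps, in order: (i) reduce to a per-frequency estimate and note the linear term drops; (ii) paraproduct-decompose $P_N(u^2)$ into high-high, high-low, low-high pieces; the diagonal/low-high piece is the main term and is bounded by $N\|P_{\lesssim N} u\|_{L^2}\|u\|_{L^\infty}$ up to constants, while the high-high piece $\sum_{N' \gtrsim N} P_N(P_{N'}u \cdot P_{N'}u)$ is controlled using Bernstein and the high-frequency $L^2$ bounds coming from $\|xu_0\|_{\dot H^{1/2}}$ together with the decay bootstrap; (iii) for the $k=0$ pointwise decay factor inside $\|u\|_{L^\infty}$, use the bootstrap hypothesis \eqref{bootstrap} directly, and for the $L^2$ norms of frequency-localized pieces invoke Remark 2.4 (pointwise polynomial decay survives Littlewood--Paley projection) to convert \eqref{bootstrap} into the needed $L^2(A_R)$-type control; (iv) integrate the resulting ODE inequality and sum the geometric series in $N \le 1$, which converges because of the $N^{1/2}$ weight; (v) track the constants so that the smallness $\epsilon \ll_M 1$ and the time restriction $|t| \ll_M \epsilon^{-5/3}$ close the estimate with a final constant independent of $M$.

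The main obstacle I expect is the high-high-to-low frequency cascade: when two high frequencies $N' \gg N$ combine to output frequency $N$, there is no derivative gain from the output projection, and one must instead extract smallness from the smallness/decay of the high-frequency factors. Here one cannot use the crude $\|u\|_{L^\infty}$ bound for both factors (that would lose the summability in $N'$); instead one should put one factor in $L^\infty$ using \eqref{bootstrap} at $k=0$ and the other in $L^2$, using the fact that the $\dot H^{1/2}$-type control of $xu_0$ propagates (via the linear operator $L(t) = x + 5t\partial_x^4$ and an energy estimate for $Lu$, as in Proposition 3.3's proof scheme) to give $\|P_{N'} u(t)\|_{L^2} \lesssim \epsilon (N')^{-1/2}$ or better, which is summable against $N' \gtrsim N$. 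Making this propagation of the weighted norm precise — i.e. that $\|L(t)u(t)\|_{\dot H^{1/2}}$ stays $\lesssim \epsilon$ on the same time scale — is really the crux, and it is presumably handled in a companion proposition; here I would simply cite it and focus on assembling the frequency-by-frequency bound and the time integration.
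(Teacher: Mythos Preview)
Your approach has a real gap. The paper's proof is much shorter: it bounds $\frac{d}{dt}\|P_\lambda u\|_{L^2}^2 \lesssim \|P_\lambda u\|_{L^2}\,\lambda^{1/2}\,\||D|^{1/2}(u^2)\|_{L^2}$ and then estimates $\||D|^{1/2}(u^2)\|_{L^2}$ \emph{pointwise}, using the full bootstrap bound together with its spatial decay. From $|u^2|\lesssim M^2\epsilon^2 t^{-1/4}\langle x\rangle^{-3/4}$ and $|\partial_x(u^2)|\lesssim M^2\epsilon^2 t^{-1/2}\langle x\rangle^{-1/2}$ one interpolates via Lemma~2.5 to get $||D|^{1/2}(u^2)|\lesssim M^2\epsilon^2 t^{-3/8}\langle x\rangle^{-5/8}$, which is square-integrable with $\||D|^{1/2}(u^2)\|_{L^2}\lesssim M^2\epsilon^2 t^{-2/5}$. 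Integrating the resulting \emph{additive} inequality gives $\|P_\lambda u(t)\|_{L^2}\lesssim \epsilon\lambda^{1/2}+\lambda^{1/2}M^2\epsilon^2 t^{3/5}$, and the condition $M^2\epsilon t^{3/5}\lesssim 1$ is precisely $|t|\ll_M\epsilon^{-5/3}$, uniformly in $\lambda$. No paraproduct decomposition and no appeal to any other proposition is needed.

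Your route, by contrast, discards the spatial decay the moment you pass to $\|u\|_{L^\infty}$ (which is $\lesssim M\epsilon t^{-1/5}$, not $t^{-1/8}$; recall $\langle x\rangle\geq t^{1/5}$). The resulting multiplicative inequality $\frac{d}{dt}\|P_N u\|_{L^2}\lesssim N\|u\|_{L^\infty}\|P_N u\|_{L^2}$ does not close uniformly in $N$ --- the $\dot B^{-1/2}_{2,\infty}$ norm requires control of \emph{all} dyadic $N$, not only $N\lesssim 1$ --- and even at $N\sim 1$ Gronwall produces $\exp(CM\epsilon t^{4/5})$, which limits the time to $\epsilon^{-5/4}$ rather than $\epsilon^{-5/3}$. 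Your proposed handling of the high-high cascade also fails: the bound $\|P_{N'}u(t)\|_{L^2}\lesssim\epsilon(N')^{-1/2}$ would amount to $u\in\dot H^{1/2}$, which neither the bootstrap hypothesis (note $\langle x\rangle^{-3/8}\notin L^2$) nor the propagated bound on $L(t)u$ delivers; the best available is $\|P_{N'}u\|_{L^2}\lesssim\epsilon(N')^{1/2}$ from the Besov norm itself, which is circular and not summable. The missing idea is simply to exploit the spatial decay in \eqref{bootstrap} to put $u^2$ directly into $\dot H^{1/2}$, bypassing the frequency trichotomy entirely.
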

Since the proof of Proposition 4.1 is very short, we give the proof right here.\\A direct energy estimate gives
\begin{align*}
\frac{d}{dt}\int (P_\lambda  u)^2 =\frac{1}{2}\int P_\lambda u\cdot\partial_xP_{\lambda}(u^2)\lesssim \Vert P_\lambda u\Vert_{L^2}\lambda^{\frac{1}{2}}\Vert |D|^{\frac{1}{2}}(u^2)\Vert_{L^2}.
\end{align*} Also we may find the bound of $\Vert |D|^{\frac{1}{2}}(u^2)\Vert_{L^2}$ as follows: From $|u^2|\leq M^2\epsilon^2 t^{-\frac{1}{4}}\langle x\rangle^{-\frac{3}{4}}$, $|\partial_x(u^2)|\leq M^2\epsilon^2 t^{-\frac{1}{2}}\langle x\rangle^{-\frac{1}{2}}$, and Lemma 2.5 one has $||D|^{\frac{1}{2}}(u^2)|\lesssim M^2\epsilon^2 t^{-\frac{3}{8}} \langle x\rangle^{-\frac{5}{8}}$, so that \begin{align*}\Vert |D|^{\frac{1}{2}}(u^2)\Vert_{L^2}\lesssim &M^2\epsilon^2 t^{-\frac{3}{8}}\left(\int (x^2+t^{\frac{2}{5}})^{-\frac{5}{8}}dx\right)^{\frac{1}{2}}= M^2\epsilon^2 t^{-\frac{3}{8}}\left(\int (y^2+1)^{-\frac{5}{4}}t^{-\frac{1}{4}}t^{\frac{1}{5}}dy\right)^{\frac{1}{2}}\\\lesssim& M^2\epsilon^2 t^{-\frac{2}{5}}\end{align*} where the change of variables $x:=t^{\frac{1}{5}}y$ is used. Hence it should be the case that \[\Vert P_\lambda u\Vert_{L^2}\lesssim \Vert P_\lambda u_0\Vert_{L^2}+\lambda^{\frac{1}{2}}M^2\epsilon^2t^{\frac{3}{5}}\lesssim \epsilon\lambda^{\frac{1}{2}}\] under the assumption of the time bound $M^2\epsilon t^{\frac{3}{5}}\leq 1$.\footnote{This is why the time scale for \eqref{eq:KW} in Theorem 1.1 is restricted to $|t|\ll \epsilon^{-\frac{5}{3}}$.} Therefore, we get the desired bound \[\Vert u(t)\Vert_{\dot{B}^{-\frac{1}{2}}_{2,\infty}}\lesssim\epsilon,\quad t\ll_M\epsilon^{-\frac{5}{3}}.\]
\begin{rmk} The reason why the direct energy estimate of the Besov bound for \eqref{eq:KW} is possible is that it has a better spatial decay of the solution that it is directly square integrable. One may see that the above argument does not work for \eqref{eq:5KdV} unless $c_1=c_2=0$\footnote{The case $c_1=c_2=0$ is merely \eqref{eq:mKW}, where we can apply the same argument as discussed in the appendix.} due to more derivatives in the nonlinearity and thus having worse spatial decay. It is also not applicable to the KdV equation for the same reason, which is why Ifrim et al. \cite{ifrim2019dispersive} used the small data $H^{-1}$ conservation law of KdV equation, which is aided from the complete integrability.\end{rmk}
\begin{prop} Let $u$ be a solution to \eqref{eq:KW} which satisfies the smallness assumption \eqref{small} for the initial data, as well as the bootstrap assumption \eqref{bootstrap}. Then the linearized equation  \begin{equation}
    z_t-z_{5x}=uz_x\tag{LinKW}\label{eq:LinKW}
\end{equation} is well-posed in $\dot{H}^{\frac{1}{2}}$ with uniform bounds \[\Vert z(t)\Vert_{\dot{H}^{\frac{1}{2}}}\sim \Vert z(0)\Vert_{\dot{H}^{\frac{1}{2}}},\quad t\ll_M\epsilon^{-\frac{5}{3}}.\]\end{prop}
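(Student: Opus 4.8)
Since \eqref{eq:LinKW} is \emph{linear} in $z$, and the coefficient $u$ is, by the bootstrap assumption \eqref{bootstrap}, smooth and rapidly decaying, the whole proposition reduces to one a priori estimate: the uniform equivalence $\Vert z(t)\Vert_{\dot H^{1/2}}\sim\Vert z(0)\Vert_{\dot H^{1/2}}$ on the stated time scale; existence, uniqueness and continuous dependence are then soft. So I would start from the energy identity
\[
\frac{d}{dt}\Vert z(t)\Vert_{\dot H^{1/2}}^2 \;=\; 2\int |D|^{1/2}z\cdot|D|^{1/2}z_{5x}\,dx \;+\; 2\int |D|^{1/2}z\cdot|D|^{1/2}(uz_x)\,dx \;=\; 2\int |D|^{1/2}z\cdot|D|^{1/2}(uz_x)\,dx,
\]
the dispersive contribution vanishing because $\partial_x^5$ is skew-adjoint and commutes with $|D|^{1/2}$. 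Everything is thus reduced to the quadratic form generated by the transport-type term $uz_x$.

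For that term one cannot symmetrize globally, since $z\in\dot H^{1/2}$ does not control $z_x\in L^2$; instead I would work dyadically, using $\Vert z\Vert_{\dot H^{1/2}}^2\sim\sum_\lambda\lambda\Vert z_\lambda\Vert_{L^2}^2$, so that it suffices to bound $\sum_\lambda\lambda\int P_\lambda(uz_x)\,z_\lambda\,dx$, and for each $\lambda$ split $uz_x$ into the three paraproduct pieces according to whether $u$ or $\partial_x z$ carries the high frequency. The balanced low--high piece $P_\lambda(u_{\ll\lambda}\partial_x z_\lambda)$ is the main one: after commuting $P_\lambda$ past the low-frequency factor (an error of size $\Vert u_x\Vert_{L^\infty}\Vert z_\lambda\Vert_{L^2}$, by the standard bound $\Vert[P_\lambda,a]f\Vert_{L^2}\lesssim\lambda^{-1}\Vert a_x\Vert_{L^\infty}\Vert f\Vert_{L^2}$) one is left with $\int u_{\ll\lambda}\partial_x z_\lambda\, z_\lambda\,dx=-\tfrac12\int(\partial_x u_{\ll\lambda})z_\lambda^2\,dx$, so this contribution is $O(\Vert u_x\Vert_{L^\infty}\lambda\Vert z_\lambda\Vert_{L^2}^2)$ and sums to $O(\Vert u_x\Vert_{L^\infty}\Vert z\Vert_{\dot H^{1/2}}^2)$. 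The high--low and high--high pieces are lower order: there the high frequency sits on $u$, so one gains derivatives on $u$, which are controlled by Bernstein's inequality, Lemma~2.3, the pointwise bootstrap bounds \eqref{bootstrap}, and the low-frequency information of Proposition~4.1; these contributions are no worse in the time weight than the main term, and in fact carry extra powers of $\epsilon$.

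Collecting the pieces gives $\big|\tfrac{d}{dt}\Vert z(t)\Vert_{\dot H^{1/2}}^2\big|\lesssim g_M(t)\Vert z(t)\Vert_{\dot H^{1/2}}^2$, where $\int_0^t g_M(s)\,ds\lesssim M\epsilon\,t^{3/5}$; indeed $\Vert u_x(s)\Vert_{L^\infty}$ is, by \eqref{bootstrap} together with $\langle x\rangle\gtrsim s^{1/5}$, bounded by $M\epsilon\,s^{-3/8-1/40}=M\epsilon\,s^{-2/5}$, whose time integral is $\sim M\epsilon\,t^{3/5}$ — which is exactly why the threshold is $|t|\ll_M\epsilon^{-5/3}$, matching Proposition~4.1. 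Since this differential inequality is two-sided, Grönwall gives both $\Vert z(t)\Vert_{\dot H^{1/2}}^2\le e^{CM\epsilon t^{3/5}}\Vert z(0)\Vert_{\dot H^{1/2}}^2$ and $\Vert z(t)\Vert_{\dot H^{1/2}}^2\ge e^{-CM\epsilon t^{3/5}}\Vert z(0)\Vert_{\dot H^{1/2}}^2$, and for $|t|\ll_M\epsilon^{-5/3}$ both exponentials are $\sim1$, which is the claimed equivalence. Well-posedness then follows in the standard way: for $z_0\in\dot H^{1/2}$ one solves \eqref{eq:LinKW} with the regularized data $P_{\le n}z_0$ by classical energy methods (the coefficient $u$ being smooth and decaying), the uniform estimate above applied to the differences $z^{(n)}-z^{(m)}$ (themselves solutions of \eqref{eq:LinKW} with data $P_{\le n}z_0-P_{\le m}z_0$) shows that $\{z^{(n)}\}$ is Cauchy in $C_t\dot H^{1/2}$, the limit solves \eqref{eq:LinKW} with data $z_0$, uniqueness is immediate from the a priori estimate applied to the zero-data difference of two solutions, and continuous dependence is the boundedness of the linear solution map $z_0\mapsto z$ that the estimate furnishes.

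\textbf{Main obstacle.} The crux is the transport term $uz_x$ at the low regularity $\dot H^{1/2}$: one cannot spend a full derivative on $z$, so the bookkeeping of the paraproduct decomposition and the accompanying commutator estimates must be arranged so that (a) the only ``dangerous'' coefficient that ever appears is $\Vert u_x\Vert_{L^\infty}$ (everything else being strictly better in the time weight and smaller in $\epsilon$), and (b) the unbalanced pieces are genuinely dominated by the bootstrap bounds on $u,u_x,u_{2x},u_{3x}$ together with the low-frequency bound of Proposition~4.1. Once that is in place, the time integration is exactly the $\epsilon t^{3/5}$ computation already carried out for Proposition~4.1, so no new smallness threshold is introduced.
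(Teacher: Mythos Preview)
Your direct paraproduct/commutator approach is correct, and it is genuinely different from --- and more elementary than --- the paper's argument. The paper, following the template of \cite{ifrim2019dispersive} for KdV, does not estimate the trilinear form directly: after reducing to $\int |D|^{1/2}y\cdot u\,H|D|^{1/2}y$ with $y=|D|^{1/2}z$, it isolates the piece where all three factors lie at frequencies $\gtrsim t^{-1/5}$ and \emph{cancels} it by introducing a modified energy $E[y]=\tfrac12\|y\|_{L^2}^2+E'[y]$, with $E'[y]$ built from the normal-form bilinear multiplier $B$ associated to the symbol $(\xi^2+\xi\eta+\eta^2)^{-1}$; the time derivative of $E'[y]$ then reproduces the high--high--high term with the opposite sign, and the remaining cubic/quartic errors $D_1,D_2,D_3$ are checked to be $O(M\epsilon t^{-2/5})\|y\|_{L^2}^2$. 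Your observation is that this machinery is unnecessary here: the very term the paper cancels is itself $O(\|u_x\|_{L^\infty})\|z\|_{\dot H^{1/2}}^2$, as one sees from the dyadic Schur argument you sketch (the key point being $\|u_\sigma\|_{L^\infty}\lesssim\sigma^{-1}\|u_x\|_{L^\infty}$, which places one derivative on $u$ in every unbalanced piece). Two small corrections to your write-up: the high--low and high--high pieces do \emph{not} carry extra powers of $\epsilon$ --- they are controlled by exactly the same $\|u_x\|_{L^\infty}\lesssim M\epsilon t^{-2/5}$ as the main low--high piece --- and Proposition~4.1 plays no role here (the pointwise bootstrap bound on $u_x$ is all you use). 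What the paper's route buys is a uniform template inherited from the KdV analysis and a corrected energy that is reused verbatim in Section~6 for the inhomogeneous $L^{NL}u$ estimate; what your route buys is a noticeably shorter proof of this proposition.
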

The proof of Proposition 4.3 will be discussed in the Section 5. Using Proposition 4.3, one may prove the following proposition:
\begin{prop}
Let $u$ be a solution to \eqref{eq:KW} satisfying the smallness assumption \eqref{small} and the bootstrap assumption \eqref{bootstrap}. Then $L^{NL}u:=xu+5tu_{4x}+\frac{5}{2}tu^2$ satisfies the bound \[\Vert L^{NL}u(t)\Vert_{\dot{H}^{\frac{1}{2}}}\lesssim \epsilon,\quad |t|\ll_M \epsilon^{-\frac{5}{3}}.\]
\end{prop}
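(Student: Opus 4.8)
The plan is to identify the evolution equation satisfied by $w:=L^{NL}u=xu+5tu_{4x}+\tfrac{5}{2}tu^2$ and then feed it into the linearized well-posedness of Proposition 4.3. First I would carry out the algebraic computation: differentiating each summand of $w$ in $t$, substituting $u_t=u_{5x}+uu_x$ from \eqref{eq:KW}, and repeatedly using the Leibniz identities $\partial_x^5(u^2)=2uu_{5x}+10u_xu_{4x}+20u_{2x}u_{3x}$ and $\partial_x^5(xu)=xu_{5x}+5u_{4x}$, one checks that $w_{5x}$ and $uw_x$ cancel, pairwise, all the ``dangerous'' contributions to $w_t$ — the ones of the schematic form $tuu_{5x}$, $tu_xu_{4x}$, $tu_{2x}u_{3x}$, $tu^2u_x$, $xuu_x$ and $u^2$ — leaving only
\[w_t-w_{5x}=uw_x+\tfrac{3}{2}u^2,\qquad w(0)=xu_0.\]
The content of this step is that the coefficient $\tfrac{5}{2}$ of the quadratic correction $tu^2$ in $L^{NL}$ is chosen precisely so that no derivative-loaded error term survives; what remains is a source carrying no derivatives and good spatial decay.

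Next I would run on $w$ the energy estimate that underlies Proposition 4.3 (to be established in Section 5), now carrying the inhomogeneous term $\tfrac{3}{2}u^2$; being a Gronwall-type estimate, it gives, for $0<t\ll_M\epsilon^{-5/3}$,
\[\Vert L^{NL}u(t)\Vert_{\dot{H}^{1/2}}\lesssim\Vert xu_0\Vert_{\dot{H}^{1/2}}+\int_0^{t}\Vert u(s)^2\Vert_{\dot{H}^{1/2}}\,ds.\]
Equivalently, one may write Duhamel's formula for the equation above and use the uniform $\dot{H}^{1/2}$ bound on the solution operator of \eqref{eq:LinKW} supplied by Proposition 4.3 (the same bound holding, by the same argument, for the propagator from any initial time $s\le t$ in the admissible range). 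By \eqref{small} the first term on the right is $\lesssim\epsilon$.

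It then remains to control the source integral, which is exactly the quantity already estimated in the proof of Proposition 4.1. From the bootstrap bounds \eqref{bootstrap} one has $|u^2|\lesssim M^2\epsilon^2 s^{-1/4}\langle x\rangle^{-3/4}$ and $|\partial_x(u^2)|\lesssim M^2\epsilon^2 s^{-1/2}\langle x\rangle^{-1/2}$, so Lemma 2.5 together with Remark 2.4 yields the pointwise bound $\big||D|^{1/2}(u^2)\big|\lesssim M^2\epsilon^2 s^{-3/8}\langle x\rangle^{-5/8}$; integrating in $x$ after the substitution $x=s^{1/5}y$ gives $\Vert u(s)^2\Vert_{\dot{H}^{1/2}}\lesssim M^2\epsilon^2 s^{-2/5}$. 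Since $-2/5>-1$ the time integral converges at the origin, and
\[\int_0^{t}\Vert u(s)^2\Vert_{\dot{H}^{1/2}}\,ds\lesssim M^2\epsilon^2 t^{3/5},\]
which is $\lesssim\epsilon$ precisely when $M^2\epsilon t^{3/5}\lesssim 1$, i.e. for $|t|\ll_M\epsilon^{-5/3}$. Combining the three displays gives $\Vert L^{NL}u(t)\Vert_{\dot{H}^{1/2}}\lesssim\epsilon$ on the asserted time scale; the case $t<0$ is symmetric.

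I do not expect a genuine obstacle: Proposition 4.3 carries the analytic weight and the source estimate merely reprises Proposition 4.1. The only points needing care are the algebraic identity for $L^{NL}u$ — in particular verifying that the quadratic correction is exactly the one that annihilates every derivative-bearing error term — and checking that the energy method behind Proposition 4.3 is robust enough to absorb an inhomogeneous right-hand side, which it is. The resulting restriction $|t|\ll_M\epsilon^{-5/3}$ is the same one already imposed by Proposition 4.1, so nothing further is lost.
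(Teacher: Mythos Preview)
Your proposal is correct and follows essentially the same route as the paper: derive the inhomogeneous linearized equation $w_t-w_{5x}=uw_x+\tfrac{3}{2}u^2$ for $w=L^{NL}u$, invoke the $\dot H^{1/2}$ energy machinery of Proposition~4.3, and control the source via the bound $\Vert u^2\Vert_{\dot H^{1/2}}\lesssim M^2\epsilon^2 t^{-2/5}$ already obtained in Proposition~4.1. The only cosmetic difference is that you package Proposition~4.3 as a black-box propagator bound and apply Duhamel, whereas the paper re-runs the corrected energy $E[z]$ directly on the inhomogeneous equation (which forces it to estimate a couple of extra, but harmless, cross terms between the source $u^2$ and the normal-form correction $B(\partial_x^{-1}u_{hi},\cdot)$); your framing is slightly cleaner and loses nothing.
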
 Proposition 4.4 will be proved in the Section 6.\\
Let me explain the relations between the above propositions and our main theorem. First, gathering the results from the above propositions, we conclude the following: Under the smallness assumption \eqref{small}, we have the bound \begin{equation}\label{NL}\Vert u(t)\Vert_{\dot{B}_{2,\infty}^{-\frac{1}{2}}}+\Vert L^{NL}u(t)\Vert_{\dot{H}^{\frac{1}{2}}}\lesssim \epsilon,\quad |t|\ll_M\epsilon^{-\frac{5}{3}}.\end{equation} Under the bound \eqref{NL}, we may apply the same argument as in the Section 3, with some additional procedures to control the nonlinear terms. This will be done in the following subsection. Finally, in the Appendix, we briefly sketch the proof of our main result in the case of \eqref{eq:mKW}.

\subsection{Proof of Theorem 1.1}
In this section, we finish the proof of our main theorem, Theorem 1.1, assuming that the Propositions 4.1, 4.3, and 4.4 are true.
For the sake of simplicity, we are going to rescale the problem to $t=1.$ Namely, given the equation \[xu+5tu_{4x}+\frac{5}{2}tu^2=f,\quad f=L^{NL}u\]let $\tilde{u}(x)=t^{\frac{4}{5}}u(t,xt^{\frac{1}{5}}),\quad \tilde{f}(x)=t^{\frac{3}{5}}f(t,xt^{\frac{1}{5}}).$ Then a direct calculation gives that $\tilde{u}$ and $\tilde{f}$ solve the same equation with $t=1,$ so that \begin{equation}\label{701}(x+5\partial_x^4)\tilde{u}+\frac{5}{2}\tilde{u}^2=\tilde{f}.\end{equation} Such scaling also gives the new bounds as follows:\begin{equation}\label{702}\Vert \tilde{u}\Vert_{\dot{B}_{2,\infty}^{-\frac{1}{2}}}\lesssim\tilde{\epsilon}:=\epsilon t^{\frac{3}{5}}\ll 1,\quad |\partial_x^k\tilde{u}(x)|\lesssim M\tilde{\epsilon}\langle x\rangle^{-\frac{3}{8}+\frac{k}{4}},\;k=0,1,2,3,\end{equation} where $\langle x\rangle=(x^2+1)^{\frac{1}{2}}$ does not depend on $t$ and \begin{equation}\label{703}\Vert \tilde{f}\Vert_{\dot{H}^{\frac{1}{2}}}\lesssim \tilde{\epsilon},\end{equation}
and moreover, the rescaled time bound assumption \begin{equation}\label{rescaled} M^2\tilde{\epsilon}\leq 1.\end{equation}
For notational convenience, we are going to drop the tilde notation. Then the proof is reduced to showing that \[\Vert \partial_x^ku\Vert_{L^\infty(A_R)}\lesssim \epsilon R^{-\frac{3}{8}+\frac{k}{4}}\] and the corresponding counterpart for the elliptic region $A_R\cap E.$
\\
Now most of the procedures are the same with the linear analysis, so I am going to skip most of the details, except the necessary nonlinear analysis.

\begin{lem}
Under the assumptions of (\ref{701}), (\ref{702}), (\ref{703}), and (\ref{rescaled}), we have \begin{equation}\label{704} \Vert f\Vert_{L^2(A_R)}\lesssim \epsilon R^{\frac{1}{2}}.
\end{equation}
\end{lem}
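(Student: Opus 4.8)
The plan is to decompose $f = L^{NL}u$ into its linear and nonlinear pieces, namely $f = Lu + \tfrac{5}{2}u^2$ with $Lu = xu + 5u_{4x}$ (at the rescaled time $t=1$), and to estimate the two contributions separately on the dyadic region $A_R$. For the linear part $Lu$, the argument of Lemma~\ref{lem:lowf1} applies verbatim: the hypothesis \eqref{703} together with the Besov bound in \eqref{702} gives $\Vert Lu\Vert_{L^2(A_R)}\lesssim\epsilon R^{1/2}$ by the same frequency-splitting-and-optimization argument (splitting $u$ at frequency $M=R^{-1}$, bounding $P_{\geq M}Lu$ via $\Vert f\Vert_{\dot H^{1/2}}\lesssim\epsilon$, and controlling the commutator $[x,P_{\geq M}]u$ by the Besov norm). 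So the entire content of the lemma is really the bound on the quadratic term $\Vert u^2\Vert_{L^2(A_R)}\lesssim\epsilon R^{1/2}$.

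For that I would use the pointwise bootstrap bound in \eqref{702}, which on $A_R=\{\langle x\rangle\sim R\}$ reads $|u(x)|\lesssim M\epsilon R^{-3/8}$, hence $|u(x)^2|\lesssim M^2\epsilon^2 R^{-3/4}$. Integrating over $A_R$, whose Lebesgue measure is $O(R)$, gives
\[
\Vert u^2\Vert_{L^2(A_R)}\lesssim \big(R\cdot (M^2\epsilon^2 R^{-3/4})^2\big)^{1/2}= M^2\epsilon^2 R^{1/2-3/4}\cdot R^{1/2}= M^2\epsilon^2 R^{1/4}.
\]
Wait — more carefully, $\Vert u^2\Vert_{L^2(A_R)}^2 \lesssim R\cdot M^4\epsilon^4 R^{-3/2} = M^4\epsilon^4 R^{-1/2}$, so $\Vert u^2\Vert_{L^2(A_R)}\lesssim M^2\epsilon^2 R^{-1/4}$. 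Since $M^2\epsilon\leq 1$ by the rescaled time bound \eqref{rescaled}, we have $M^2\epsilon^2 R^{-1/4}\leq \epsilon R^{-1/4}\leq \epsilon R^{1/2}$ for $R\geq 1$. Thus the quadratic term is not merely acceptable but strictly better than the claimed bound, and combining with the linear estimate yields \eqref{704}.

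The only mildly delicate point — and the step I would flag as requiring care rather than the routine pointwise-times-measure estimate above — is that $f$ on $A_R$ should really be interpreted as $\chi_R f$ or $\tilde\chi_R f$ (the localized object $v$-analogue), so one must check that the linear piece estimate is applied to the correct localized quantity and that the cutoffs do not spoil the frequency-localization arguments; this is exactly the bookkeeping already carried out in Lemma~\ref{lem:lowf1} and Section~\ref{sec:302}, so no genuinely new obstacle arises. An alternative to the crude $L^\infty\times|A_R|^{1/2}$ bound on $u^2$, should one want to avoid losing powers of $M$, is to write $\Vert u^2\Vert_{L^2(A_R)}\leq \Vert u\Vert_{L^\infty(A_R)}\Vert u\Vert_{L^2(A_R)}$ and invoke \eqref{318a} (rescaled), giving $\lesssim M\epsilon R^{-3/8}\cdot\epsilon R^{1/8}=M\epsilon^2 R^{-1/4}$, which again is dominated by $\epsilon R^{1/2}$ using $M\epsilon\lesssim 1$. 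Either route closes the lemma.
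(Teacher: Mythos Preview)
Your proposal is correct and follows essentially the same route as the paper: split off the quadratic piece $u^2$, bound it pointwise via the bootstrap assumption to get $\Vert u^2\Vert_{L^2(A_R)}\lesssim M^2\epsilon^2 R^{-1/4}\leq \epsilon R^{1/2}$, and handle the linear piece by the frequency-splitting argument of Lemma~\ref{lem:lowf1} at scale $R^{-1}$. The one imprecision to tidy up is your phrase ``bounding $P_{\geq M}Lu$ via $\Vert f\Vert_{\dot H^{1/2}}$'': hypothesis~\eqref{703} controls $f=L^{NL}u$, not $Lu$, so either observe first that $\Vert Lu\Vert_{\dot H^{1/2}}\leq \Vert f\Vert_{\dot H^{1/2}}+\tfrac{5}{2}\Vert u^2\Vert_{\dot H^{1/2}}\lesssim\epsilon$ (the $u^2$ bound is already available from the bootstrap), or---as the paper does---project $f$ itself at frequency $R^{-1}$ and write $f=P_{>R^{-1}}f+P_{<R^{-1}}Lu+P_{<R^{-1}}(u^2)$, which lets you use \eqref{703} directly on the high-frequency piece without ever isolating $Lu$.
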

\begin{proof}
As in the proof of Lemma 3.4, we split $u$ at the frequency cutoff $R^{-1}$ and compute:
\[f=P_{>R^{-1}}f+P_{<R^{-1}}Lu+P_{<R^{-1}}(u^2).\footnote{Here I intentionally dropped any insignificant constant coefficients.}\]The first two terms can be estimated in the same way as Lemma 3.4. For the last term, the pointwise bootstrap bound \eqref{702} and Lemma 2.3 gives \[|P_{<R^{-1}}(u^2)|\lesssim  M^2\epsilon^2 R^{-\frac{3}{4}},\]so that \[\Vert P_{<R^{-1}}(u^2)\Vert_{L^2(A_R)}\lesssim M^2\epsilon^2 R^{-\frac{1}{4}},\] and using the time bound \eqref{rescaled}, the desired bound \eqref{704} is obtained.
\end{proof}

\begin{lem}
Under the assumptions of (\ref{701}), (\ref{702}), (\ref{703}), and (\ref{rescaled}), we have \begin{equation}\label{705}
\Vert \partial_x^ku\Vert_{L^2(A_R)}\lesssim \epsilon R^{\frac{1}{8}+\frac{k}{4}}.
\end{equation}
\end{lem}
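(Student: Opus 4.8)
The plan is to transcribe the proof of Lemma \ref{lem:hif} (the linear $L^2(A_R)$ estimate \eqref{308}) almost verbatim, treating the quadratic term $\frac{5}{2}u^2$ as a harmless perturbation of the source. Concretely, I would rewrite \eqref{701} as $(x+5\partial_x^4)u = g$ with $g := f - \frac{5}{2}u^2$, and first reduce the lemma to the single observation that $g$ satisfies exactly the bounds $Lu$ enjoyed in the linear setting, with an extra overall factor of $\epsilon$: namely $\Vert g\Vert_{\dot H^{1/2}}\lesssim\epsilon$ and $\Vert g\Vert_{L^2(A_R)}\lesssim\epsilon R^{1/2}$ (the latter being the nonlinear replacement for the low-frequency bound \eqref{305} of Lemma \ref{lem:lowf1}).

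The only genuinely nonlinear input is verifying these bounds on $g$, and both halves come from ingredients already in place. For $f$ one has $\Vert f\Vert_{\dot H^{1/2}}\lesssim\epsilon$ by \eqref{703} and $\Vert f\Vert_{L^2(A_R)}\lesssim\epsilon R^{1/2}$ by \eqref{704} (Lemma 4.5). For $u^2$ I would reuse the computation from the proof of Proposition 4.1: the pointwise bootstrap bounds \eqref{702} give $|u^2|\lesssim M^2\epsilon^2\langle x\rangle^{-3/4}$ and $|\partial_x(u^2)|\lesssim M^2\epsilon^2\langle x\rangle^{-1/2}$, so Lemma 2.5 yields $||D|^{1/2}(u^2)|\lesssim M^2\epsilon^2\langle x\rangle^{-5/8}$; since $5/4>1$ this is square-integrable over $\mathbb{R}$, giving $\Vert|D|^{1/2}(u^2)\Vert_{L^2}\lesssim M^2\epsilon^2$, while on $A_R$ we get $\Vert u^2\Vert_{L^2(A_R)}\lesssim M^2\epsilon^2 R^{-1/4}$. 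The rescaled time restriction \eqref{rescaled}, $M^2\epsilon\le 1$, then absorbs one power of $\epsilon$, so $\Vert u^2\Vert_{\dot H^{1/2}}\lesssim\epsilon$ and $\Vert u^2\Vert_{L^2(A_R)}\lesssim\epsilon R^{-1/4}\le\epsilon R^{1/2}$. Combining with the bounds on $f$ gives the claimed bounds on $g$.

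With $g$ in hand, I would run the proof of Lemma \ref{lem:hif} line by line. Split $u=u_{<M}+\sum_{\lambda\ge M}u_\lambda$; the piece $u_{<M}$ is controlled by $\Vert\partial_x^k u_{<M}\Vert_{L^2(A_R)}\lesssim M^k\Vert u_{<M}\Vert_{L^2}\lesssim\epsilon M^{k+1/2}$ using the Besov bound in \eqref{702}. For each dyadic $\lambda\ge M$ the preliminary estimates \eqref{310a}--\eqref{310c} become $\Vert(x+5\partial_x^4)u_\lambda\Vert_{L^2}=\Vert P_\lambda g+[x,P_\lambda]u\Vert_{L^2}\lesssim\epsilon\lambda^{-1/2}$ (handling the commutator exactly as in Lemma 3.4), $\Vert u_\lambda\Vert_{L^2}\lesssim\epsilon\lambda^{1/2}$, $\Vert u_{\lambda,x}\Vert_{L^2}\lesssim\epsilon\lambda^{3/2}$. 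Feeding these into the same integration-by-parts identities for $\int\chi_R u_{\lambda,2x}^2$ and $\int\chi_R u_{\lambda,3x}^2$, and then into the same commutator decompositions $\chi_R u_\lambda=\partial_{x,\lambda}^{-2}(\chi_R u_{\lambda,2x})-[\partial_{x,\lambda}^{-2},\chi_R]u_{\lambda,2x}$ and \eqref{315}, reproduces \eqref{313}, \eqref{316} with an extra $\epsilon$, and hence \eqref{314}, \eqref{317} multiplied by $\epsilon$. Optimizing at $M=R^{1/4}$ as before gives $\Vert u\Vert_{L^2(A_R)}\lesssim\epsilon R^{1/8}$, $\Vert u_x\Vert_{L^2(A_R)}\lesssim\epsilon R^{3/8}$, and then $\Vert u_{2x}\Vert_{L^2(A_R)}\lesssim\epsilon R^{5/8}$ via the $\int\chi_R u_{2x}^2$ identity with $Lu=g$ in place of \eqref{305}; finally $5u_{4x}=g-xu$ gives $\Vert u_{4x}\Vert_{L^2(A_R)}\lesssim\epsilon R^{9/8}$ and $k=3$ follows by the $\int\chi_R u_{3x}^2$ integration by parts, yielding \eqref{705} for $k=0,1,2,3,4$. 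I do not expect a real obstacle; the whole point is that every nonlinear contribution carries a strictly smaller power of $\epsilon$, exactly as in Proposition 4.1 and Lemma 4.5. The one point requiring care is the bookkeeping of constants: the bounds on $u^2$ involve the bootstrap constant $M$, so one must invoke \eqref{rescaled} to convert each $M^2\epsilon^2$ into $\epsilon$ \emph{before} summing over dyadic frequencies, ensuring the implied constant in \eqref{705} is universal.
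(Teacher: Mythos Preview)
Your proposal is correct and follows essentially the same approach as the paper: both reduce to rerunning the linear argument of Lemma \ref{lem:hif} once the quadratic term $u^2$ is shown to be a harmless perturbation of the source at the level of $\dot H^{1/2}$ and $L^2(A_R)$. Your packaging via $g=f-\tfrac{5}{2}u^2$ with the global bound $\|u^2\|_{\dot H^{1/2}}\lesssim M^2\epsilon^2\le\epsilon$ (from Lemma 2.5 and \eqref{rescaled}) is in fact a slightly cleaner way to obtain the key dyadic estimate $\|Lu_\lambda\|_{L^2}\lesssim\epsilon\lambda^{-1/2}$ than the paper's separate treatment of $P_\lambda f$ and $P_\lambda(u^2)$, but the substance is identical.
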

\begin{proof}
The low frequency case $\lambda\leq R^{\frac{1}{4}}$ can be dealt with in the same way in the linear case, so we focus on the high frequency case, $\lambda> R^{\frac{1}{4}}$. Again we split as \[Lu_\lambda=P_\lambda L^{NL}u+\lambda^{-1}u_{\lambda}-P_\lambda(u^2),\] and the first two terms can be estimated in the same way as in the linear case, so we again focus on the nonlinear terms. Using (\ref{702}) and Lemma 2.3 gives \[\Vert P_\lambda(u^2)\Vert_{L^2(A_R)}\lesssim \epsilon R^{-\frac{1}{4}}\] and \[\Vert \partial_xP_\lambda(u^2)\Vert_{L^2(A_R)}\lesssim \epsilon R^{\frac{1}{4}},\] so that \[\Vert P_\lambda(u^2)\Vert_{L^2(A_R)}\lesssim \epsilon \lambda^{-\frac{1}{2}}.\]Now the argument is completed as in the linear case.
\end{proof}

Now we may localize the solution $u$ to the dyadic regions $|x|\sim R.$ Let $v:=\chi_Ru,$ then $v$ solves the equation \begin{equation}\label{706}(x+5\partial_x^4)v+\frac{5}{2}uv=g,\end{equation}with the bounds
\begin{equation}\label{707}
\Vert \partial_x^k v\Vert_{L^2(A_R)}\lesssim \epsilon R^{\frac{1}{8}+\frac{k}{4}},\quad k=0,1,2,3,4,
\end{equation}and \begin{equation}\label{708}
\Vert g\Vert_{\dot{H}^{\frac{1}{2}}}\lesssim \epsilon,\quad \Vert g\Vert_{L^2(A_R)}\lesssim \epsilon R^{\frac{1}{2}}.
\end{equation}

As in the linear case, we consider three different regions; self-similar, elliptic, and hyperbolic.

The self-similar region is entirely similar to the linear case, so we omit the details.

The elliptic region is also almost similar, but there is a subtle difference due to the presence of nonlinearity. More precisely, we again split $g=g_{lo}+g_{hi},$ where the frequency scale is $R^{\frac{1}{4}}.$ Moreover, letting $v_1:=v-x^{-1}g_{lo}$ and \[\tilde{L}^{NL}v_1=(x+5\partial_x^4)v_1+uv_1=g_{hi}-5\partial_x^4(x^{-1}g_{lo}):=g_1.\]The estimation of $g_1$ is the same as in the linear case, yielding the $\epsilon R^{-\frac{1}{8}}$ bound. The only difference is the estimation of integrals. To be more specific, we start from the equality \[\int_{\mathbb{R}}v_1 \tilde{L}^{NL}v_1dx=\int_{\mathbb{R}}g_1v_1dx\] and integrating by parts gives \[\int_{\mathbb{R}}x|v_1|^2 dx+5\int_{\mathbb{R}}|\partial_x^2v_1|^2dx=\int_{\mathbb{R}}g_1v_1dx+\int_{\mathbb{R}}uv_1^2dx\]
The first integral on the right is routine, so we focus on the second integral including nonlinearity. A direct calculation gives \[\int_{\mathbb{R}}uv_1^2dx=O(M\epsilon R^{-\frac{3}{8}})\Vert v_1\Vert_{L^2}^2,\]so this bound can be absorbed into the left hand side provided $R$ is large enough. Note that by the time bound \eqref{rescaled} and the assumption $M\gg 1,$ the choice of $R$ is independent of $M$. Thus one may conclude \[\Vert v_1\Vert_{L^2}\lesssim \epsilon R^{-\frac{9}{8}},\quad \Vert\partial_x^2 v_1\Vert_{L^2}\lesssim \epsilon R^{-\frac{5}{8}}.\]
Now the same process as in the linear case gives the extra bounds \[\Vert \partial_x^4v_1\Vert_{L^2}\lesssim \epsilon R^{-\frac{1}{8}},\quad \Vert \partial_xv_1\Vert_{L^2}\lesssim \epsilon R^{-\frac{7}{8}},\quad \Vert \partial_x^3v_1\Vert_{L^2}\lesssim \epsilon R^{-\frac{3}{8}}.\]
Hence Gagliardo-Nirenberg inequality gives \[|\partial_x^k v_1|\lesssim\epsilon R^{-1+\frac{k}{4}},\quad k=0,1,2,3.\]
The pointwise bound of $x^{-1}g_{lo}$ can be obtained in the same way as in the linear analysis, so we omit the details.

Lastly, let us turn to the hyperbolic region. Again we do a spatial energy estimate to obtain \[\frac{d}{dx}(-x|v|^2+5|v_{2x}|^2-10v_xv_{3x}+2g_{lo}v)=-|v|^2+5uvv_x-2v_xg_{hi}+2g_{lo,x}v.\] Now integrating both sides and using H\"older's inequality gives \begin{align*}-x|v|^2+5|v_{2x}|^2-10v_xv_{3x}&\lesssim \Vert v_x\Vert_{L^2}\Vert g_{hi}\Vert_{L^2}+\Vert g_{lo,x}\Vert_{L^2}\Vert v\Vert_{L^2}+\sup_{x\in A_R^H}|g_{lo}v|+\int |uvv_x| dx\\&\lesssim \epsilon^2 R^{\frac{1}{4}}+M\epsilon^3 R^{\frac{1}{8}}\lesssim \epsilon^2 R^{\frac{1}{4}}.\end{align*}
A similar calculation gives \[-x|v_x|^2+5|v_{3x}|^2+2xvv_{2x}\lesssim \epsilon^2 R^{\frac{3}{4}},\]and the rest is now the same with the linear case. This completes the proof of the main theorem.
\begin{rmk}
Here one may see that when estimating on the hyperbolic region, one may estimate the nonlinear interaction term $uvv_x$ directly, unlike in the KdV case where one has to split the nonlinear interaction term and send some pieces to the left hand side. This difference is again due to the better spatial decay of our bootstrap bound \eqref{bootstrap} than in the case of KdV equation.
\end{rmk}
\vspace{10mm}

\section{Bounds for linearized equation}
In this section, we give the proof of Proposition 4.3.
\\Note that \eqref{eq:KW} enjoys scaling symmetry $u(t,x)\mapsto \lambda^4 u(\lambda^5t,\lambda x),$ which is critical in $\dot{H}^{-\frac{7}{2}}$ in the sense that the scaling preserves $\dot{H}^{-\frac{7}{2}}$ norm. The scaling vector field which generates the symmetry is given as $\mathcal{S}=5t\partial_t+x\partial_x+4.$

Define $\Lambda=\partial_x^{-1}\mathcal{S}$ and $\mathcal{L}=\partial_t-\partial_x^5.$ Then by the commutator relation \[[\mathcal{L},\mathcal{S}]=5\mathcal{L},\quad [\mathcal{S},\partial_x]=-\partial_x\] one has \begin{align*}
    \mathcal{L}(\Lambda u)&=\partial_x^{-1}(\mathcal{S}+5)\mathcal{L}u=u\partial_x\Lambda u.
\end{align*} Hence, $z=\Lambda u$ solves the linearized equation \[z_t-z_{5x}=uz_x.\] However, the function $z=xu+u_{4x}+\frac{5}{2}u^2+3\partial_x^{-1}u$ contains the inverse derivative of $u$, which can cause problem if $u$ is not mean-zero. Hence, we rather remove the $\partial_x^{-1}u$ term and work with the function \[w:=L^{NL}u:=xu+u_{4x}+\frac{5}{2}u^2,\] which solves the inhomogeneous linearized equation \[w_t+w_{5x}=uw_x-\frac{3}{2}u^2.\]
Now we are going to prove Proposition 4.3.
\\
Let $y=|D|^{\frac{1}{2}}z,$ then $y$ satisfies the equation \[y_t-y_{5x}=-|D|^{\frac{1}{2}}(uH|D|^{\frac{1}{2}}y).\] A direct calculation and frequency analysis as in \cite{ifrim2019dispersive} gives \[\frac{1}{2}\frac{d}{dt}\int y^2 = -\int |D|^{\frac{1}{2}}y\cdot uH|D|^{\frac{1}{2}}y=O(M\epsilon t^{-\frac{2}{5}})\Vert y\Vert_{L^2}^2-\int |D|^{\frac{1}{2}}y_{hi}\cdot u_{hi}H|D|^{\frac{1}{2}}y_{hi}\] in the time scale $M^2\epsilon t^{\frac{3}{5}}\leq 1.$ Here $y_{hi}:=P_{\gtrsim t^{-\frac{1}{5}}}y$ means a piece of $y$ frequency-localized in the scale $|\xi|\gtrsim t^{-\frac{1}{5}}.$

Hence, we shall try a normal form analysis to cancel the high frequency term, following the idea in \cite{hunter2015long}. Let $\tilde{u}=u+B(u,u)$ where $B$ is a symmetric bilinear Fourier multiplier. We want to find $B$ such that $\tilde{u}_t-\tilde{u}_{5x}$ consists of cubic or higher order terms of $u$. A direct calculation gives that \[\tilde{u}_t-\tilde{u}_{5x}=uu_x+B(u_{5x},u)+B(u,u_{5x})-\partial_x^5B(u,u)+h.o.t.,\]where $h.o.t.$ denotes the cubic or higher order terms of $u$. Here our goal is to cancel out $uu_x+B(u_{5x},u)+B(u,u_{5x})-\partial_x^5B(u,u).$ It turns out that the formal normal form is given as $\frac{1}{10}B(\partial_x^{-1}u,\partial_x^{-1}u)$ with the symbol of $B$ being $\hat{B}(\xi,\eta)=\frac{1}{\xi^2+\xi\eta+\eta^2}.$ However, this can cause problems on the low frequency region, so we truncate the low frequency part and get $\frac{1}{10}B(\partial_x^{-1}u_{hi},\partial_x^{-1}u_{hi}).$ Now linearizing the normal form gives the normal form correction of \eqref{eq:LinKW} as $\frac{1}{10}B(\partial_x^{-1}u_{hi},\partial_x^{-1}z_{hi}).$ Then plugging $y=|D|^{\frac{1}{2}}z$ gives $\frac{1}{10}B(\partial_x^{-1}u_{hi},\partial_x^{-1}|D|^{-\frac{1}{2}}y_{hi}).$ Let $\tilde{z}=z+\frac{1}{10}B(\partial_x^{-1}u_{hi},\partial_x^{-1}z_{hi}).$ Then observe that \begin{align*}\Vert \tilde{z}\Vert_{\dot{H}^{\frac{1}{2}}}&=\Vert y+|D|^{\frac{1}{2}}\frac{1}{10}B(\partial_x^{-1}u_{hi},\partial_x^{-1}|D|^{-\frac{1}{2}}y_{hi})\Vert_{L^2}^2\\=&\Vert y\Vert_{L^2}^2+\frac{1}{5}\langle |D|^{\frac{1}{2}}y,B(\partial_x^{-1}u_{hi},\partial_x^{-1}|D|^{-\frac{1}{2}}y_{hi})\rangle+\Vert \frac{1}{10}|D|^{\frac{1}{2}}B(\partial_x^{-1}u_{hi},\partial_x^{-1}|D|^{-\frac{1}{2}}y_{hi})\Vert_{L^2}^2.\end{align*}Here the last term is quartic, so it becomes more perturbative, and we do not include this into the corrected energy term. Hence the energy correction term is given as \[E'[y]=\frac{1}{10}\int |D|^{\frac{1}{2}}y_{hi}B(\partial_x^{-1}u_{hi},
\partial_x^{-1}|D|^{-\frac{1}{2}}y_{hi}).\] This corrected energy plays the desired role itself, but a more symmetric form would be convenient, so shifting $|D|^{-1}$ from the rightmost $y_{hi}$ to the leftmost one gives
\[E'[y]=\frac{1}{10}\int |D|^{-\frac{1}{2}}y_{hi}B(\partial_x^{-1}u_{hi},
H|D|^{-\frac{1}{2}}y_{hi})\]First check that $E[y]:=\frac{1}{2}\Vert y\Vert_{L^2}^2+E'[y]\sim \Vert y\Vert_{L^2}^2.$ This directly follows from:\[|E'[y]|\lesssim t^{\frac{1}{10}}\Vert y\Vert_{L^2}t^{\frac{2}{5}}t^{\frac{1}{5}}\Vert u\Vert_{L^\infty}t^{\frac{1}{10}}\Vert y\Vert_{L^2}\leq M\epsilon t^{\frac{3}{5}}\Vert y\Vert_{L^2}^2\leq \frac{1}{M}\Vert y\Vert_{L^2}^2,\]so by letting $M$ sufficiently large, we conclude $E[y]\sim \Vert y\Vert_{L^2}^2.$ Now a direct calculation gives\footnote{Here I intentionally dropped any insignificant constant multiplications.} \begin{align*}
    &\frac{d}{dt}E'[y]\\=&D_1+D_2+D_3+\int |D|^{-\frac{1}{2}}\partial_x^5y_{hi}B(\partial_x^{-1}u_{hi},
H|D|^{-\frac{1}{2}}y_{hi})\\+&\int |D|^{-\frac{1}{2}}y_{hi}B(\partial_x^{4}u_{hi},
H|D|^{-\frac{1}{2}}y_{hi})+\int |D|^{-\frac{1}{2}}y_{hi}B(\partial_x^{-1}u_{hi},
H|D|^{-\frac{1}{2}}\partial_x^5y_{hi})\\=&D_1+D_2+D_3-\int |D|^{-\frac{1}{2}}y_{hi}\partial_x^5B(\partial_x^{-1}u_{hi},
H|D|^{-\frac{1}{2}}y_{hi})\\+&\int |D|^{-\frac{1}{2}}y_{hi}B(\partial_x^{4}u_{hi},
H|D|^{-\frac{1}{2}}y_{hi})+\int |D|^{-\frac{1}{2}}y_{hi}B(\partial_x^{4}u_{hi},
H|D|^{-\frac{1}{2}}\partial_x^5y_{hi})
\\=&D_1+D_2+D_3+\int |D|^{-\frac{1}{2}}y_{hi} \partial_x(u_{hi} |D|^{\frac{1}{2}}y_{hi})=D_1+D_2+D_3+\int H|D|^{\frac{1}{2}}y_{hi} \cdot u_{hi} |D|^{\frac{1}{2}}y_{hi}
\end{align*} where the penultimate equality follows from the symbol calculation \[\frac{(\xi+\eta)^5-\xi^5-\eta^5}{\xi^2+\xi\eta+\eta^2}=5\xi\eta(\xi+\eta).\] Hence the last integral cancels out the ``bad" term of $\frac{1}{2}\frac{d}{dt}\int y^2$, and the other terms($D_1$, $D_2$, $D_3$) are perturbative, as demonstrated below:
\begin{enumerate}[(a)]
    \item $D_1$ arises from the time derivative of the frequency scale truncation, i.e. $\partial_t P_{hi}.$ Namely, $\partial_t(1-\varphi(t^{\frac{1}{5}}\xi))=-\frac{1}{5}t^{-\frac{4}{5}}\xi\varphi'(t^{\frac{1}{5}}\xi)=-t^{-1}\psi(t^{\frac{1}{5}}\xi)$ where $\psi(\eta)=\frac{1}{5}\eta \varphi'(\eta)$ is the Littlewood-Paley projection on the frequency scale $\sim 1.$ Thus the bound for $D_1$ is given as \begin{align*}
    &\left|\int |D|^{-\frac{1}{2}}(\partial_t P_{hi})y\cdot B(\partial_x^{-1}u_{hi},
H|D|^{-\frac{1}{2}}y_{hi})\right|\lesssim t^{-\frac{9}{10}}\Vert y\Vert_{L^2}\Vert B(\partial_x^{-1}u_{hi},
H|D|^{-\frac{1}{2}}y_{hi})\Vert_{L^2}\\\lesssim&t^{-\frac{9}{10}}\Vert y\Vert_{L^2}t^{\frac{2}{5}}\Vert\partial_x^{-1}u_{hi}\Vert_{L^\infty}\Vert H|D|^{-\frac{1}{2}}y_{hi}\Vert_{L^2}\lesssim M\epsilon t^{-\frac{2}{5}}\Vert y\Vert_{L^2}^2.
    \end{align*} 
    The rest terms, i.e. $\int |D|^{-\frac{1}{2}}y_{hi}\cdot B(\partial_x^{-1}(\partial_t P_{hi})u,
H|D|^{-\frac{1}{2}}y_{hi})$ and \\$\int |D|^{-\frac{1}{2}}y_{hi}\cdot B(\partial_x^{-1}u_{hi},
H|D|^{-\frac{1}{2}}(\partial_t P_{hi})y)$ can be estimated in a similar fashion.
    \item $D_2$ consists of the nonlinarity of $u$ from \eqref{eq:KW}:\begin{align*}
    \left|\int |D|^{-\frac{1}{2}}y_{hi}B((u^2)_{hi},
H|D|^{-\frac{1}{2}}y_{hi})\right|\lesssim t^{\frac{1}{10}}\Vert y\Vert_{L^2}t^{\frac{2}{5}}\Vert u^2\Vert_{L^\infty}t^{\frac{1}{10}}\Vert y\Vert_{L^2} \leq M\epsilon t^{-\frac{2}{5}}.
    \end{align*}
    \item $D_3$ consists of the linearized counterpart of $D_2$:
    \begin{align*}
    &\left|\int (uH|D|^{\frac{1}{2}}y)_{hi}\cdot B(\partial_x^{-1}u_{hi},H|D|^{-\frac{1}{2}}y_{hi})\right|\\=&\left|\int Hy\cdot |D|^{\frac{1}{2}}\left(uP_{hi} B(\partial_x^{-1}u_{hi},H|D|^{-\frac{1}{2}}y_{hi})\right)\right|\lesssim\Vert y\Vert_{L^2} \Vert|D|^{\frac{1}{2}}\left(uP_{hi} B(\partial_x^{-1}u_{hi},H|D|^{-\frac{1}{2}}y_{hi})\right)\Vert_{L^2}^2.
    \end{align*} Now the Kato-Ponce commutator estimate (see, for example, \cite{kenig1993well}) \[\Vert |D|^s(fg)-(|D|^sf)g \Vert_{L^2}\lesssim \Vert f\Vert_{L^\infty}\Vert |D|^sg\Vert_{L^2},\quad 0<s<1, \] gives\[\Vert |D|^s(fg)\Vert_{L^2}\lesssim \Vert |D|^s(fg)-(|D|^sf)g\Vert_{L^2}+\Vert (|D|^sf)g\Vert_{L^2}\lesssim \Vert f\Vert_{L^\infty}\Vert |D|^sg\Vert_{L^2}+\Vert |D|^sf\Vert_{L^\infty}\Vert g\Vert_{L^2},\]so that\begin{align*}
    &\Vert|D|^{\frac{1}{2}}\left(uP_{hi} B(\partial_x^{-1}u_{hi},H|D|^{-\frac{1}{2}}y_{hi})\right)\Vert_{L^2}\\\lesssim&  \Vert|D|^{\frac{1}{2}}u\Vert_{L^\infty}\Vert P_{hi} B(\partial_x^{-1}u_{hi},H|D|^{-\frac{1}{2}}y_{hi})\Vert_{L^2}+\Vert u\Vert_{L^\infty}\Vert |D|^{\frac{1}{2}}P_{hi} B(\partial_x^{-1}u_{hi},H|D|^{-\frac{1}{2}}y_{hi})\Vert_{L^2}\\\lesssim& M\epsilon t^{-\frac{3}{10}}\cdot t^{\frac{2}{5}}\Vert\partial_x^{-1}u_{hi}\Vert_{L^\infty}\Vert H|D|^{-\frac{1}{2}}y_{hi}\Vert_{L^2}+M\epsilon t^{-\frac{1}{5}}\cdot t^{\frac{3}{10}}\Vert\partial_x^{-1}u_{hi}\Vert_{L^\infty}\Vert H|D|^{-\frac{1}{2}}y_{hi}\Vert_{L^2}\\\lesssim&M\epsilon t^{-\frac{2}{5}}\Vert y\Vert_{L^2}.
    \end{align*} Hence we obtain the bound $M\epsilon t^{-\frac{2}{5}}\Vert y\Vert_{L^2}^2$. The other term $\int |D|^{-\frac{1}{2}}y_{hi}B(\partial_x^{-1}u_{hi},
H(u|D|^{\frac{1}{2}}y)_{hi})$ can be treated similarly after decomposing $(u|D|^{\frac{1}{2}}y)_{hi}=\partial_x(uH|D|^{-\frac{1}{2}}y)_{hi}-(u_xH|D|^{-\frac{1}{2}}y)_{hi}$. One may handle the former term by noting that the bilinear map $(u,v)\mapsto B(u,v_x)$ has symbol $\frac{i\eta}{\xi^2+\xi\eta+\eta^2}$, so that $\Vert B(u,v_x)\Vert_{L^2}\lesssim t^{\frac{1}{5}}\Vert u\Vert_{L^\infty}\Vert v\Vert_{L^2},$ provided $u$ and $v$ are frequency supported at the scale $\gtrsim t^{-\frac{1}{5}}.$
\\
Hence, we conclude that, if we let the corrected energy $E[y]=\frac{1}{2}\int y^2 + E'[y],$ then \[\frac{d}{dt}E[y]\lesssim M\epsilon t^{-\frac{2}{5}}\Vert y\Vert_{L^2}^2\sim M\epsilon t^{-\frac{2}{5}} E[y].\] Hence Gronwall's inequality gives \[E[y]\leq e^{CM\epsilon t^{\frac{3}{5}}}E[y_0]\lesssim E[y_0]\] in the time scale $t\ll_M \epsilon^{-\frac{5}{3}}.$ This completes the proof of Proposition 4.3.
\end{enumerate}
\begin{rmk}\begin{enumerate}[(1)]
\item The above analysis also works when the right hand side of \eqref{eq:LinKW} is $u^k w_x$ with $k\geq 1,$ with $u_{hi}$ being replaced by $(u^k)_{hi}.$ Namely, we may consider $u^k$ as a single component, which makes no difference of the details. The only difference is the valid time scale of the analysis. For instance, if $k=2,$ which corresponds to the case of \eqref{eq:5KdV} with quadratic terms gone, the time scale becomes $\epsilon^{-5}.$
\item The analysis in this section is almost identical to that in \cite{ifrim2019dispersive}, since the KdV and Kawahara equation share the same form of nonlinear terms and their linearized counterparts, on which the overall analysis mainly depends. The linear part only affects the form of the corrected energy.
\end{enumerate}\end{rmk}
\section{Sobolev bound of $L^{NL}u$}
In this section, we are going to prove Proposition 4.4. Unlike the \cite{ifrim2019dispersive}, we do not have to dive into a complicated normal form analysis, since we have a better bootstrap bound for $u$. Namely, we already proved that $\Vert u^2\Vert_{\dot{H}^{\frac{1}{2}}}\lesssim M^2\epsilon^2t^{-\frac{2}{5}}$ in the Proposition 4.1. Then it remains to see that the $\dot{H}^{\frac{1}{2}}$ bound of the inhomogeneous term is perturbative enough that the result of Proposition 4.3 is still valid to $L^{NL}u.$ Of course, the answer is affirmative, which is merely the Proposition 4.4.\\
Let $z:=|D|^{\frac{1}{2}}L^{NL}u.$ Then $z$ solves the equation \[z_t-z_{5x}=-|D|^{\frac{1}{2}}(uH|D|^{\frac{1}{2}}z-\frac{3}{2}u^2).\] Hence one has:
\begin{align*}
\frac{d}{dt}E[z]=&-\frac{3}{2}\int z|D|^{\frac{1}{2}}(u^2)- \frac{3}{10}\int (u^2)_{hi}B(\partial_x^{-1}u_{hi},H|D|^{-\frac{1}{2}}z_{hi})\\ -&\frac{3}{10}\int |D|^{-\frac{1}{2}}z_{hi}B(\partial_x^{-1}u_{hi},H(u^2)_{hi})+O(M\epsilon t^{-\frac{2}{5}})\Vert z\Vert_{L^2}^2,
\end{align*}
where $E[\cdot]$ is the corrected energy defined in the Section 5. Here most of the steps are identical to the Section 5, so it only remains to estimate the integrals including the inhomogeneous term. Namely, \begin{align*}&\left|\int z|D|^{\frac{1}{2}}(u^2)\right|\lesssim M^2\epsilon^2t^{-\frac{2}{5}}\Vert z\Vert_{L^2}\lesssim M^2\epsilon t^{-\frac{2}{5}}\Vert z\Vert_{L^2}^2+M^2\epsilon^3t^{-\frac{2}{5}}\lesssim M^2\epsilon t^{-\frac{2}{5}}\Vert z\Vert_{L^2}^2+\epsilon^2 t^{-1},\\&\left|\int (u^2)_{hi}B(\partial_x^{-1}u_{hi},H|D|^{-\frac{1}{2}}z_{hi})\right|\\\lesssim &\Vert u^2\Vert_{\dot{H}^{\frac{1}{2}}}t^{\frac{2}{5}}\Vert \partial_x^{-1}u_{hi}\Vert_{L^\infty}\Vert |D|^{-\frac{1}{2}}z_{hi}\Vert_{L^2}\lesssim M^3\epsilon^3 t^{\frac{1}{10}}\Vert z\Vert_{L^2}\lesssim M^2\epsilon t^{-\frac{2}{5}}\Vert z\Vert_{L^2}^2+M^4\epsilon^5 t^{\frac{3}{5}}\\\lesssim&M^2\epsilon t^{-\frac{2}{5}}\Vert z\Vert_{L^2}^2+\epsilon^2 t^{-\frac{6}{5}},\end{align*}
and the rest terms can be estimated similarly.

To sum up, one obtains the bound 
\[\frac{d}{dt}E[z]\lesssim M^2\epsilon t^{-\frac{2}{5}}\Vert z\Vert_{L^2}^2+\epsilon^2 t^{-1}.\]
Hence Gr\"onwall's inequality gives \[\Vert z\Vert_{L^2}^2\lesssim \epsilon^2 e^{CM^2\epsilon t^{\frac{3}{5}}}\lesssim\epsilon^2\] in the time scale $|t|\ll\epsilon^{-\frac{5}{3}}.$ This completes the proof of Proposition 4.4.

\section{Appendix: The case of modified Kawahara equation} In this appendix, we sketch the proof of the case $m=2$, namely \eqref{eq:mKW}, of the Theorem 1.1.\\First, when it comes to the Subsection 4.1, one may rescale the equation \[xu+5tu_{4x}+\frac{5}{3}tu^3=f,\quad f=L^{NL}u\] to \[x\tilde{u}+5\tilde{u}_{4x}+\frac{5}{3}\tilde{u}^3=\tilde{f}\] by letting $\tilde{u}(x)=t^{\frac{2}{5}}u(t,xt^{\frac{1}{5}})$ and $\tilde{f}(x)=t^{\frac{1}{5}}f(t,xt^{\frac{1}{5}})$. Also the localized equation becomes \[(x+5\partial_x^4)v+\frac{5}{3}u^2v=g\] with $v$ and $g$ satisfying the same bounds as in the Subsection 4.1. In these settings, all the same steps work properly.\\Moreover, one may easily see that the argument in the Section 5 does work to prove Proposition 4.1 in the case of \eqref{eq:mKW}, under the time bound assumption $M^{\frac{3}{2}}\epsilon t^{\frac{1}{5}}\leq 1.$ Here is the step where the time scale restriction $|t|\ll \epsilon^{-5}$ is needed.\\Secondly, as mentioned in the remark at the end of the Section 5, one may prove the same bound from Proposition 4.3 with the same argument. Here we have a subtly different setting, and I state here for convenience.\\The equation \eqref{eq:mKW} enjoys the scaling symmetry $u(t,x)\mapsto \lambda^2u(\lambda^5t,\lambda x)$, so the scaling vector field is given as $\mathcal{S}=5t\partial_t+x\partial_x+2,$ and one may see that if $u$ solves \eqref{eq:mKW}, then $\Lambda u:=\partial_x^{-1}\mathcal{S}u=xu+5tu_{4x}+\frac{5}{3}tu^3+\partial_x^{-1}u$ solves the linearized equation \[w_t-w_{5x}=u^2w_x.\] Now the rest of the arguments are the same as in the Section 5.

The Section 6 is again similar. In this case $L^{NL}u=xu+5tu_{4x}+\frac{5}{3}tu^3=\Lambda u-\partial_x^{-1}u$ solves the inhomogeneous equation \[z_t-z_{5x}=u^2z_x+\frac{2}{3}u^3,\] and the energy estimate of the inhomogeneous equation is now routine. Note that the sign of $c$ of \eqref{eq:mKW} does not play any role in the entire steps of the proof.
\section{Conclusion}
We have discussed how long does a small data solution to Kawahara and modified Kawahara equation shows a linear dispersive decay bound. The result shows the tendency of change of the time scale where the linear dispersive decay bound holds depending on the linear and nonlinear part of the equation. More specifically, for the Kawahara equation, which has the same nonlinearity as KdV equation and a higher order of the linear part, the time scale of linear dispersive decay bound is possibly shorter than that of KdV equation. On the other hand, for the modified Kawahara equation, which has a higher order nonlinearity than KdV and Kawahara equations, has a longer time scale of such decay bound. However, unlike the KdV equation, we still do not know whether the time bounds found in this article is optimal due to the non-integrability of the equations, which should be found in a future work.
\section*{Declarations}
\noindent\\\small{\textbf{Acknowledgements}\\The author appreciate Soonsik Kwon for helpful discussions and encouragement to this work.}
\\\small{\textbf{Funding statement}\\The author is partially supported by NRF-2019R1A5A1028324 and NRF-2018R1D1A1A0908335.}
\\\small{\textbf{Availability of data and materials}\\Not applicable.}
\\\textbf{Competing interests}\\The author declares that he does not have any competing interests.\\
\textbf{Author's contributions}\\The single author contributed and reviewed the work solely.
\printbibliography
\end{document}